\let\cite=\citet
\newcommand{\FIGS}{.}
\begin{document}
\newcommand\footnotemarkfromtitle[1]{%
\renewcommand{\thefootnote}{\fnsymbol{footnote}}%
\footnotemark[#1]%
\renewcommand{\thefootnote}{\arabic{footnote}}}

\title{Second-order invariant domain preserving approximation of the
  compressible Navier--Stokes equations~\footnotemark[1]}

\author{Jean-Luc~Guermond\footnotemark[2]
  \and Matthias~Maier\footnotemark[2]
  \and Bojan~Popov\footnotemark[2]
  \and Ignacio~Tomas \footnotemark[3]}

\date{Draft version \today}

\maketitle

\renewcommand{\thefootnote}{\fnsymbol{footnote}}

\footnotetext[1]{%
  This material is based upon work supported in part by the National
  Science Foundation grants DMS 1619892, DMS 1620058 and DMS 1912847, by the Air Force
  Office of Scientific Research, USAF, under grant/contract number
  FA9550-18-1-0397, and by the Army Research Office under grant/contract
  number W911NF-15-1-0517, \today}
\footnotetext[2]{%
  Dept. of Mathematics, Texas A\&M University 3368 TAMU, College
  Station, TX 77843, USA}%
\footnotetext[3]{%
  Sandia National Laboratories$^{\S}$, P.O. Box 5800, MS 1320, Albuquerque,
  NM 87185-1320.}
\footnotetext[4]{%
  Sandia National Laboratories is a multimission laboratory managed and
  operated by National Technology \& Engineering Solutions of Sandia, LLC,
  a wholly owned subsidiary of Honeywell International Inc., for the U.S.
  Department of Energy's National Nuclear Security Administration under
  contract DE-NA0003525. This document describes objective technical
  results and analysis. Any subjective views or opinions that might be
  expressed in the paper do not necessarily represent the views of the U.S.
  Department of Energy or the United States Government.}

\renewcommand{\thefootnote}{\arabic{footnote}}

\begin{abstract}
  We present a fully discrete approximation technique for the compressible
  Navier-Stokes equations that is second-order accurate in time and space,
  semi-implicit, and guaranteed to be invariant domain preserving. The
  restriction on the time step is the standard hyperbolic CFL condition, ie
  $\dt \lesssim \calO(h)/V$ where $V$ is some reference velocity scale and $h$ the typical meshsize.
\end{abstract}

\begin{keywords}
  Conservation equations, hyperbolic systems, Navier-Stokes equations,
  Euler equations, invariant domains, high-order method, convex limiting,
  finite element method.
\end{keywords}

\begin{AMS}
  35L65, 65M60, 65M12, 65N30
\end{AMS}

\pagestyle{myheadings} \thispagestyle{plain}
\markboth{}{Invariant domain approximation of the compressible Navier--Stokes equations}


\section{Introduction}
The objective of this paper is to present a fully-discrete
approximation technique for the compressible Navier-Stokes equations
that is implicit-explicit, second-order accurate in time and space,
and guaranteed to be invariant domain preserving. The restriction on
the time-step size is the standard hyperbolic CFL condition, \ie
$\dt \lesssim \calO(h)/V$, where $V$ is some reference velocity scale
and $h$ is the typical meshsize. To the best of our knowledge, this
method is the first one that is guaranteed to be invariant domain
preserving under the standard hyperbolic CFL condition and be
second-order accurate in time and space.

Of course there are countless papers in the literature describing techniques
to approximate the time-dependent compressible Navier-Stokes equations, but
there are very few papers establishing invariant domain properties. Among the
latest results in this direction we refer the reader to
\cite{Grapsas_Herbin_Kheriji_Latche_2016} where a first-order method using
upwinding and staggered grid is developed (see Eq.~(3.1) therein). The authors
prove positivity of the density and the internal energy (Lem.~4.4 therein).
Unconditional stability is obtained by solving a nonlinear system involving the
mass conservation equation and the internal energy equation.  One important
aspect of this method is that it is robust in the low Mach regime. A similar
technique is developed in \cite{Gallouet_Gastaldo_Herbin_Latche_2008} for the
compressible barotropic Navier-Stokes equations (see \S3.6 therein).  We also
refer to \cite{Zhang_JCP_2017} where a fully explicit dG scheme is proposed with
positivity on the internal energy enforced by limiting. The invariant domain
properties are proved there under the parabolic time step restriction $\dt
\lesssim \calO(h^2)/\mu$, where $\mu$ is some reference viscosity scale.

The key idea of the present paper is to build on
\citep{Guermond_Nazarov_Popov_Tomas_SISC_2019,
Guermond_Popov_Tomas_CMAME_2019} and use an operator splitting technique to
treat separately the hyperbolic part and the parabolic part of the problem.
The hyperbolic sub-step is treated explicitly and the parabolic sub-step is
treated implicitly. This idea is not new and we refer for instance to
\cite{Demkowicz_etal_1990} for an early attempt in this direction. The novelty
of our approach is that each sub-step is guaranteed to be invariant domain
preserving. In addition, the scheme is conservative and fully-computable (e.g.
the method is fully-discrete and there are no open-ended questions regarding
the solvability of the sub-problems). One key ingredient of our method is that
the parabolic sub-step is reformulated in terms of the velocity and the
internal energy in a way that makes the method conservative, invariant domain
preserving, and second-order accurate
(see \S\ref{Sec:parabolic_implicit_update}).

The remainder of the paper is organized as follows. We recall the compressible
Navier-Stokes model and introduce the notation in~\S\ref{Sec:NS_model}. The
overall principle of the method is summarized in
\S\ref{Sec:Principles_of_the_method}. As usual, the devil is in the details: we
discuss technical aspects of the hyperbolic substep and the parabolic substep
in \S\ref{Sec:hyperbolic_sub_step} and \S\ref{Sec:parabolic_implicit_update},
respectively. The key results of the two sections are
Theorem~\ref{Thm:hyperbolic step} and Theorem~\ref{Thm:parabolic_energy}. We
discuss the full method in \S\ref{Sec:complete_method}. The main statement
summarizing the results of the paper is Theorem~\ref{Thm:main}. The method is
illustrated numerically in \S\ref{Sec:Numerical_illustration}. Some conclusions
and open problems are reported in \S\ref{Sec:conclusions}.


\section{The compressible Navier-Stokes equation} \label{Sec:NS_model}

In this section we define the notation and recall the Navier-Stokes equations.

\subsection{Notation}
The fluid occupies a bounded, polyhedral domain $\Dom$ in $\Real^d$.
The space dimension $d$ is either $2$ or $3$ for simplicity. The dependent
variable is $\bu\eqq (\rho, \bbm, E)\tr \in \Real^{d+2}$, where
$\rho$ is the density, $\bbm$ the momentum, $E$ the total mechanical energy. In
this paper $\bu$ is considered to be a column vector. The velocity is given by
$\bv\eqq \rho^{-1} \bbm$. The quantity $e(\bu) \eqq \rho^{-1}E -
\frac12\|\bv\|_{\ell^2}^2$ is the specific internal energy.

Given some Lipschitz flux
$\polf:\Real^{d+2} \to \Real^{(d+2)\CROSS d}$, $\polf(\bu(\bx))$ is a
matrix with entries $\polf_{ij}(\bu(\bx))$, $1\le i\le d+2$,
$1\le j\le d$ and $\DIV\polf(\bu(\bx))$ is a column vector with
entries
$(\DIV\polf(\bu))_i= \sum_{1\le j\le d}\partial_{x_j}
\polf_{ij}(\bu(\bx))$.
For any $\bn=(n_1\ldots,n_d)\tr\in \Real^d$, we denote by
$\polf(\bu)\bn$ the column vector with entries
$\sum_{1\le l\le d} \polf_{il}(\bu) n_l$, where
$i\in\intset{1}{d+2}$. Given two integers $m\le n$, the symbol
$\intset{m}{n}$ represents the set of integers $\{m,m+1,
\ldots,n\}$.
Given two second-order tensors $\pols$ and $\pole$ in $\Real^{d\CROSS d}$,
we denote the full tensor contraction operation by $\,\pols{:}\pole \eqq
\sum_{i,j\in\intset{1}{d}} \pols_{ij}\pole_{ij}$. As usual $\ba\SCAL\bb
\eqq \sum_{i\in\intset{1}{d}} a_i b_i$ denotes the Euclidean inner-product
in $\Real^d$, and $\ba\otimes \bb$ is the second-order tensor with entries
$(a_i b_j)_{i,j\in\intset{1}{d}}$. For any smooth vector field $\ba:\Dom
\mapsto \Real^d$, $\GRAD\ba$ is the second-order tensor with entries
$(\partial_j a_i)_{i,j\in\intset{1}{d}}$. The Euclidean norm in $\Real^d$
and the Frobenius norm in $\Real^{d\CROSS d}$ are denoted by
$\|\SCAL\|_{\ell^2}$.

\subsection{Model description}
Given some initial time $t_0$ with initial data $\bu_0\eqq
(\rho_0,\bbm_0,E_0)$, we look for $\bu(t) := (\rho,\bbm,E)(t)$ solving the
compressible Navier-Stokes system in some weak sense:
\begin{subequations} \label{NS}
  \begin{align}
    &\partial_t \rho + \DIV(\bv \rho) =  0,
    \label{mass}
    \\
    &\partial_t \bbm + \DIV\big(\bv\otimes \bbm + p(\bu) \polI -
    \pols(\bv)\big) = \bef, \label{momentum}
    \\
    &\partial_t E  + \DIV\big(\bv(E + p(\bu)) - \pols(\bv)\bv +
    \Hflux(\bu)\big) = \bef\SCAL\bv,
    \label{total_energy}
  \end{align}%
where $p(\bu)$ is the pressure, $\polI \in \mathbb{R}^{d\times d}$ is the
identity matrix, $\bef$ is a prescribed external force, $\pols(\bv)$
is the viscous stress tensor and $\Hflux(\bu)$ is the heat-flux. We assume
that the fluid is Newtonian and that the heat-flux follows Fourier's law,
that is to say:
\begin{align*}
  \pols(\bv) &\eqq 2 \mu \pole(\bv) + (\lambda-\tfrac23 \mu) \DIV\bv \polI,
  \qquad
  \pole(\bv) \eqq \GRADs \bv \eqq \tfrac12\big(\GRAD \bv + (\GRAD
  \bv)\tr\big),
  \\
  \Hflux(\bu) &\eqq-c_v^{-1}\kappa \GRAD e.
\end{align*}%
\end{subequations}
The constants $\mu>0$ and $\lambda\ge 0$ are the shear and the bulk
viscosities, respectively. The constant $\kappa$ is the thermal
conductivity and $c_v$ is the heat capacity at constant volume. We
will assume throughout that the coefficient $c_v^{-1}\kappa$ is constant
and does not depend on the state $\bu(t)$.

For the sake of completeness we recall the following standard result regarding
the viscous stress tensor $\pols(\bv)$.
\begin{lemma}
Let $k\eqq \max(0,\frac{d}{3}(1-\frac{3\lambda}{2\mu})) \,\in\, [0,1)$.
Then the following holds true for all smooth vector fields $\bv$ in
$\Real^d$:
\begin{equation}
  \pols(\bv){:}\GRAD \bv \ge 2\mu(1-k)\|\pole(\bv)\|_{\ell^2}^2.
  \label{dissipation_s_e}
\end{equation}
\end{lemma}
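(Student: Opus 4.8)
The plan is to compute the contraction $\pols(\bv){:}\GRAD\bv$ explicitly and then control the divergence term by $\|\pole(\bv)\|_{\ell^2}^2$ through a Cauchy--Schwarz estimate. First I would use that $\pols(\bv)$ is symmetric, being a linear combination of the symmetric tensors $\pole(\bv)$ and $\polI$; since the full contraction of a symmetric tensor with an antisymmetric one vanishes, only the symmetric part $\pole(\bv)=\GRADs\bv$ of $\GRAD\bv$ survives, that is, $\pols(\bv){:}\GRAD\bv=\pols(\bv){:}\pole(\bv)$. Expanding the definition of $\pols(\bv)$, and using both $\pole(\bv){:}\pole(\bv)=\|\pole(\bv)\|_{\ell^2}^2$ and $\polI{:}\pole(\bv)=\sum_{i\in\intset{1}{d}}\big(\pole(\bv)\big)_{ii}=\DIV\bv$, yields the identity
\[
  \pols(\bv){:}\GRAD\bv
  = 2\mu\,\|\pole(\bv)\|_{\ell^2}^2
  + (\lambda-\tfrac23\mu)\,(\DIV\bv)^2 .
\]

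Next I would estimate the divergence term. A Cauchy--Schwarz inequality applied to the diagonal entries of $\pole(\bv)$ gives
\[
  (\DIV\bv)^2
  = \Big(\sum_{i\in\intset{1}{d}}\big(\pole(\bv)\big)_{ii}\Big)^2
  \le d\sum_{i\in\intset{1}{d}}\big(\pole(\bv)\big)_{ii}^2
  \le d\,\|\pole(\bv)\|_{\ell^2}^2 ,
\]
which is also readable off the orthogonal decomposition of $\pole(\bv)$ into its trace-free and spherical parts.

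I would then split into two cases according to the sign of $\lambda-\tfrac23\mu$. When $\lambda-\tfrac23\mu\ge0$ the divergence contribution is nonnegative, so $\pols(\bv){:}\GRAD\bv\ge 2\mu\|\pole(\bv)\|_{\ell^2}^2$, which matches the claim because $k=0$ in this regime. When $\lambda-\tfrac23\mu<0$ the contribution is nonpositive, and multiplying the bound $(\DIV\bv)^2\le d\|\pole(\bv)\|_{\ell^2}^2$ by the negative factor $(\lambda-\tfrac23\mu)$ reverses the inequality, giving $\pols(\bv){:}\GRAD\bv\ge\big(2\mu+d(\lambda-\tfrac23\mu)\big)\|\pole(\bv)\|_{\ell^2}^2$. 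The closing step is the purely algebraic verification that the coefficient coincides with the one in the statement, namely $2\mu+d(\lambda-\tfrac23\mu)=2\mu\big(1-\tfrac d3(1-\tfrac{3\lambda}{2\mu})\big)=2\mu(1-k)$, so both cases combine into the asserted inequality. I do not expect any genuine obstacle here: the estimate is elementary, and the only points needing care are the sign bookkeeping in the second case and the observation that $k$ lies in $[0,1)$, which is read off from the definition using $\lambda\ge0$, $\mu>0$, and $d\le3$ (so that $0\le k\le\tfrac d3$, with $\lambda>0$ enforcing $k<1$ in the borderline case $d=3$).
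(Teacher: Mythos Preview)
Your argument is correct and follows essentially the same route as the paper: compute $\pols(\bv){:}\GRAD\bv = 2\mu\|\pole(\bv)\|_{\ell^2}^2 + (\lambda-\tfrac23\mu)(\DIV\bv)^2$, bound $(\DIV\bv)^2\le d\,\|\pole(\bv)\|_{\ell^2}^2$ via Cauchy--Schwarz on the diagonal entries, and absorb the possibly negative divergence term. The paper's proof is terser (it omits the explicit case split and the algebraic check of the coefficient), but the content is identical.
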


\begin{proof}
  We have $\pols(\bv){:}\GRAD \bv =2\mu \GRADs\bv{:}\GRADs\bv + (\lambda
  -\frac23 \mu)(\DIV\bv)^2$ and
  \begin{align*}
  \GRADs\bv{:}\GRADs\bv = \textstyle{\sum_{i,j\in\intset{1}{d}}}
    |\pole(\bv)_{ij}|^2 \ge \textstyle{\sum_{i\in\intset{1}{d}}} |\pole(\bv)_{ii}|^2
    = \sum_{i\in\intset{1}{d}} |\partial_i\bv_{i}|^2\ge \tfrac{1}{d}
    (\DIV\bv)^2.
  \end{align*}
  The result follows readily.
\end{proof}

We assume that the pressure $p(\bu)$ is derived from a complete equation of
state. That is to say, introducing the specific volume $v := \rho^{-1}$,
there exists a specific entropy $\sigma(v,e)$ where $\sigma:\Real^+ \CROSS
\Real^+ \to \Real$ is concave. We assume that the differential of
$\sigma(v,e)$ is consistent with the Gibbs identity $T \diff\sigma = \diff
e + p \diff v$; therefore, setting $s(\rho,e):=\sigma(v,e)$,  we have
$T^{-1} \eqq \tfrac{\partial s}{\partial e}$, $p := -\rho^2 T
\tfrac{\partial s}{\partial \rho}$, see \cite{Meni1989,
Harten_Lax_Levermore_Morokoff_1998} for more details.

The \emph{admissible set} of~\eqref{NS} is
\begin{equation}
  \calA :=\big\{\bu = (\rho,\bbm, E) \in \mathbb{R}^{d+2} \st \rho>0, \
  e(\bu)>0 \big\}.
\end{equation}
This is to say, we expect any reasonable solution $\bu(t)$ of \eqref{NS} to
stay in $\calA$. Following the terminology of \cite{Chueh_Conley_Smoller}
we say that $\calA$ is an invariant domain of \eqref{NS}. Important
properties we want to maintain at the discrete level are thus the
positivity of the density $\rho\ge 0$ and the positivity of the specific
internal energy $e(\bu) = \rho^{-1}E - \frac12 \|\bv\|_{\ell^2}^2$.

We remark in passing that in our formulation the pressure $p(\bu)$ is
necessarily given by an equation of state. This is a crucial property
requried for our splitting technique and analysis. Thus, it is not possible
to directly modify the pressure $p(\bu)$ to, e.\,g., drive the system
\eqref{NS} by an external pressure gradient. Instead, it is necessary to
identify a corresponding external force $\bef$ acting on the momentum as
stated in \eqref{momentum}. For example, a prescribed pressure gradient
$\delta p$ in $\be_1$ direction for a tube of length $L$ then takes the
form $\bef=-\delta p / L \be_1$.

We conclude the section by briefly commenting on boundary conditions for
system \eqref{NS}. For the sake of simplicity and to avoid analytical
technicalities we assume for our analysis that no-slip and thermally
insulating boundary conditions are enforced on the entire boundary
$\front$:
\begin{equation}
  \label{BOUNDARY_CONDITION}
  \bv_{|\front} = \bzero,
  \qquad\qquad
  \Hflux(\bu)\SCAL\bn_{|\front}=0.
\end{equation}
We point out that \eqref{BOUNDARY_CONDITION} indeed adequately closes the
system \eqref{NS}, i.\,e., no further boundary condition has to be
enforced. We refer the reader to \citep[\S
3.5]{Guermond_Nazarov_Popov_Tomas_SISC_2019}, as well as
\S\ref{Sec:hyperbolic_sub_step} and \S\ref{Sec:parabolic_implicit_update}.
In principle it is possible to enforce numerous different boundary
conditions. A careful analysis of all of them is beyond the scope of the
present paper. In our numerical illustrations


\section{Strang splitting and stability properties of the hyperbolic and
parabolic limits}

We will separate the parabolic part and the hyperbolic part of the compressible
Navier-Stokes system \eqref{NS} by using Strang's splitting. To this end, we
first identify a hyperbolic (\S\ref{subse:hyperbolic}) and a parabolic
(\S\ref{subse:parabolic}) limit, then define the corresponding continuous
solution operators $S_1$ and $S_2$, and finally identify associated stability
properties. Both operators are then combined to form a solution operator for
\eqref{NS}; see \S\ref{Sec:Principles_of_the_method}. We make no claim of
originality about the operator splitting technique. The idea is not new and has
been applied in the context of the compressible Navier--Stokes equation by
\cite{Demkowicz_etal_1990} among others. The novel contribution of the present
work is the following:
\begin{enumerate}
  \item[\textup{(i)}]
    The construction of discrete solution operators $S_{1,h}$ and $S_{2,h}$
    that when sequentially compounded yields conservation, preservation of
    the invariant domain properties of the continuous operators (stated
    Assumptions~\ref{Assumption:StabHyp} and \ref{Assumption:StabParab} in
    \S\ref{subse:hyperbolic} and \S\ref{subse:parabolic}), and satisfaction of a
    discrete energy balance.
  \item[\textup{(ii)}]
    Specific choice of transformation of variables at the intermediate step
    making the analysis and an efficient implementation possible.
\end{enumerate}
%


\subsection{Hyperbolic limit}
\label{subse:hyperbolic}

The first asymptotic limit of \eqref{NS} that we discuss is the vanishing
viscosity limit, \ie $\mu,\lambda\to0$, with vanishing external forces
$\bef$. In this case the governing equations for $\bu(t)$ reduce to
\begin{subequations}\label{hyperbolic}
  \begin{align}
    &\partial_t \rho + \DIV(\bv \rho) =  0,
    \label{hyperbolic_mass}
    \\
    &\partial_t \bbm + \DIV(\bv\otimes \bbm + p(\bu) \polI) = \bzero,
    \label{hyperbolic_momentum}
    \\
    &\partial_t E  + \DIV(\bv(E + p(\bu)) = 0,
    \label{hyperbolic_total_energy}
    \\
    &\bv\SCAL \bn_{|\front}=0.
    \label{hyperbolic_bc}
  \end{align}%
\end{subequations}
Here, in the vanishing viscosity limit, the no-slip boundary condition
\eqref{BOUNDARY_CONDITION} is replaced by the slip condition
\eqref{hyperbolic_bc}.
We assume in the following that there exists some Banach space $\calB_1$
with sufficient smoothness so that, provided $\bu_0\in \calB_1\cap \calA$,
some reasonable notion of entropy/viscosity solution of \eqref{hyperbolic}
can be established for some time interval $(t_0,t^*)$. Giving a precise
definition of the functional-space $\calB_1$ is beyond the scope of this
manuscript and somewhat irrelevant for our purpose. The reader is referred
to \cite{LionsBook2,Feireisl2004} for further insights on this very
difficult question. Here, by slight abuse of notation $\calB_1\cap \calA$
shall mean $\{\bv\in \calB_1 \st \bv(\bx)\in \calA\ \text{for \ae} \ \bx
\in \Dom\}$. Let $S_1(\cdot,t_0)$ denote the solution map to
\eqref{hyperbolic}; that is, $S_1(t,t_0)(\bu_0)=\bu(t)$ for \ae $t\in
(t_0,t^*)$. We introduce a stability notion for the solution map
$S_1(\cdot,t_0)$:

\begin{assumption}[Stable hyperbolic solution operator]
\label{Assumption:StabHyp}
Let $\bu_0\in \calB_1\cap \calA$. Recalling that $s$ denotes the specific
entropy, we set $s_{\min}\eqq\essinf_{\bx\in\Dom}s(\rho_0(\bx),e(\bu_0(\bx)))$
and introduce the set:

\begin{equation}
\calC(\bu_0)=\big\{\bu =(\rho,\bbm, E) \st \rho>0, \ e>0,\ s(e,\rho)\ge
s_{\min}\big\}. \label{def_of_calB}
\end{equation}

We make the following assumptions:
\begin{enumerate}
  \item[\textup{(i)}]
    The set $\calC(\bu_0)$ is invariant under $S_1(.,t_0)$ for all
    $\bu_0\in \calA \cap \calB_1$, \ie we have $S_1(t,t_0)(\bu_0)(\bx)\in
    \calC(\bu_0)$ for \ae $\bx\in \Dom$ and \ae $t\in (t_0,t^*)$. We say
    $\calC(\bu_0)$ is an invariant domain of \eqref{hyperbolic}.
  \item[\textup{(ii)}]
    There exists a family of entropy pairs $(\eta,\Eflux)$ (for instance a
    subset of generalized entropies, \cf
    \cite{Harten_Lax_Levermore_Morokoff_1998}) such that the following
    inequality holds in the distribution sense in $\Dom\CROSS(t_0,t^*)$:
    \vspace{-1em}
    \begin{align*}
      \partial_t \eta(S_1(t,t_0)(\bu_0))
      +\DIV(\Eflux(S_1(t,t_0)(\bu_0)))\le 0.
    \end{align*}
  \end{enumerate}
\end{assumption}

\subsection{Parabolic limit}
\label{subse:parabolic}

The second asymptotic regime of interest in this manuscrupt is the diffusive or
parabolic regime. The limit is formally obtained by assuming dominant diffusive
terms and dominant external forces in \eqref{NS}. Then, the governing
equations for $\bu(\bx,t)$ reduce to
\begin{subequations}
  \label{parabolic}
  \begin{align}
    &\partial_t \rho  =  0, \label{parabolic_mass}
    \\
    &\partial_t \bbm - \DIV(\pols(\bv)) = \bef,
    \label{parabolic_momentum}
    \\
    &\partial_t E   + \DIV(\Hflux(\bu)- \pols(\bv) \bv) = \bef\SCAL\bv,
    \label{parabolic_total_energy}
    \\
    &\bv_{|\front}=\bzero, \qquad \Hflux(\bu)\SCAL\bn_{|\front}=0 .
  \end{align}
\end{subequations}%
Since \eqref{parabolic_mass} implies $\rho(\bx,t) = \rho_0(\bx)$ for all
$\bx \in \Dom$, \eqref{parabolic_momentum} is equivalent to $\rho
\partial_t\bv - \DIV(\pols(\bv)) = \bef$. Taking the dot product of
\eqref{parabolic_momentum} and $\bv$ and subtracting the result from
\eqref{parabolic_total_energy} gives $\partial_t (E - \frac12 \rho\bv^2) +
\DIV\Hflux(\bu) - \pols(\bv){:}\GRAD \bv =0$. Consequently,
\eqref{parabolic} is equivalent to solving
\begin{subequations} \label{parabolic_simplified}
  \begin{align}
    &\rho_0\partial_t\bv - \DIV(\pols(\bv)) = \bef,\qquad \bv_{|\front}=\bzero,
    \label{parabolic_momentum_simplified}
    \\
    &\rho_0\partial_t e  - c_v^{-1}\kappa \LAP e  = \pols(\bv){:}\pole(\bv),
    \qquad\partial_n e=0, \label{parabolic_internal_energy_simplified}
    \\
    &E\eqq \rho_0 e + \tfrac12 \rho_0\bv^2.
  \end{align}
\end{subequations}
Notice that
$\partial_t\int_\Dom E \diff x = \int_\Dom \bef\SCAL\bv \diff x$; \ie the
variation of the total energy is equal to the power of the external
sources. Existence and uniqueness of \eqref{parabolic_simplified} can be
established via standard parabolic solution theory,
\cite{gilbarg2015elliptic}. For the sake of argument we will simply assume
that there exists two Banach spaces $\calB_2$ and $\calB_3$ such that the
above problem is well-posed for all $\bu_0\in\calB_2$ and all
$\bef\in\calB_3$. Similarly to the hyperbolic case, we introduce the
solution map $S_2(t,t_0)(\bu_0,\bef)=\bu(t)$ to \eqref{parabolic}. Although
the following assumption could easily be formulated rigorously in
form of a theorem by specifying $\calB_2$ and $\calB_3$, we prefer to make
it an assumption to stay general and avoid distracting technicalities.

\begin{assumption}[Stable parabolic solution operator]
\label{Assumption:StabParab}
Let $\bu_0\in \calA\cap\calB_2$ and $\bef\in \calB_3$. We define
$e_{\min}=\essinf_{\bx\in \Dom}e(\bu_0(\bx))$ and set
\begin{equation}
\calD(\bu_0) :=\big\{\bu =(\rho,\bbm, E) \st \rho>0, \ e \ge
e_{\min}\big\}.
\label{def_of_calC}
\end{equation}
By possibly making $t^*$ smaller we assume that:
\begin{itemize}
  \item[\textup{(i)}]
    The set $\calD(\bu_0)$ is invariant under $S_2(.,t_0)$ for all
    $\bu_0\in \calA\cap\calB_2$ and all $\bef\in\calB_3$, \ie
    $S_2(t,t_0)(\bu_0,\bef)(\bx)\in \calD(\bu_0)$ for \ae $\bx\in \Dom$ and
    \ae $t\in (t_0,t^*)$. We say $\calD(\bu_0)$ is an invariant domain
    for~\eqref{parabolic}.
  \item[\textup{(ii)}]
    The functional setting defining $S_2(t,t_0)$ is smooth enough such that
    \begin{align}
      \label{total_energy_conservation}
      \int_\Dom E(t) \diff x = \int_\Dom E(t_0) \diff x + \int_{t_0}^t
      \int_\Dom \bef\SCAL\bv \diff x.
    \end{align}
  \end{itemize}
\end{assumption}

Our goal in the remainder of the paper is to construct a space and time
approximation that is formally second-order accurate and complies in some
reasonable sense with the stability properties stated in
Assumption~\ref{Assumption:StabHyp} and in
Assumption~\ref{Assumption:StabParab}.

\begin{remark}[Vacuum]
  In this paper we assume that no vacuum forms. It has been established in
  \cite[Thm.~2]{Hoff_Serre_1991} that the compressible Navier-Stokes
  equation may lose continuous dependency with respect to the initial data
  when vacuum occurs. It is shown therein that one can construct initial
  data in one dimension such that continuous dependency is actually lost.
\end{remark}

\begin{remark}[$L^p$ estimates]
  Using $\rho>0$ and the entropy $\eta(\bu)=\rho$ in
  Assumption~\ref{Assumption:StabHyp} we infer the estimate
  $\|\rho\|_{L^\infty(t_0,t^*;L^1(\Dom))}
  \le \|\rho_0\|_{L^\infty(t_0,t^*;L^1(\Dom))}$. Using $\rho>0$, $e>0$,
  \eqref{total_energy_conservation} implies
  $\|\rho e\|_{L^\infty(t_0,t^*;L^1(\Dom))} + \frac12 \|\rho \bv^2\|_{L^\infty(t_0,t^*;L^1(\Dom))}
  = \|\rho_0 e_0\|_{L^1(\Dom)} + \frac12 \|\rho_0 \bv_0^2\|_{L^1(\Dom)}
  +\int_{t_0}^t \int_\Dom \bef\SCAL\bv \diff x.$
\end{remark}

\subsection{Stability of Strang splitting}
\label{Sec:Principles_of_the_method}

We propose to approximate \eqref{NS} in time by using Strang's
operator splitting. To be able to do that without going too much into
the functional analysis details, we add one more assumption which can
always be shown to hold true if $\bu_0$ is smooth enough and $t^*$ is
small enough.

\begin{assumption}[Smoothness compatibility]
  \label{Assumption:smoothness_compatibility}%
  The following holds true for
  \ae $t\in(t_0,t^*)$:
  \begin{itemize}
    \item[\textup{(i)}]
      For all $\bu_0\in\calB_1\cap\calA$, $S_1(t,t_0)(\bu_0)\in \calB_2$.
    \item[\textup{(ii)}]
      For all $\bu_0\in\calB_2\cap\calA$ and all $\bef\in \calB_3$,
      $S_2(t,t_0)(\bu_0,\bef)\in \calB_1$.
  \end{itemize}
\end{assumption}

Let $\dt \in (0,t^*-t_0]$ be some time step and let $\bu_0\in
\calB_1\cap\calA$ be some admissible initial data at time $t_0$. The
version of Strang's splitting technique we consider in this paper consists
of approximating the solution to~\eqref{NS} at $t\eqq t_0+\dt$ as follows:
\begin{equation}
  \label{continuous_Strang}
  S_1(t_0+\dt,t_0+\tfrac12\dt)
  \circ S_2(t_0+\dt,t_0)
  \circ (S_1(t_0+\tfrac12\dt,t_0)(\bu_0),\bef).
\end{equation}
The above operations are well-posed by virtue of
Assumption~\ref{Assumption:smoothness_compatibility}. The following result
is elementary but is essential since it is the template for the
approximation technique that we propose.
\begin{lemma}
  The following holds true for all $\bu_0\in \calB_1\cap\calA$, all
  $\bef\in \calB_3$, all $\dt \in (0,t^*-t_0]$, and \ae $\bx\in\Dom$:
  \vspace{-1em}
  \begin{align*}
    S_1(t_0+\dt,t_0+\tfrac12\dt)\circ S_2(t_0+\dt,t_0)\circ
    (S_1(t_0+\tfrac12\dt,t_0)(\bu_0),\bef)(\bx) \;\in\; \calA.
  \end{align*}
\end{lemma}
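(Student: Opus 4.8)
The plan is to read the three-fold composition from right to left as a chain $\bu_0 \mapsto \bu_1 \mapsto \bu_2 \mapsto \bu_3$ and to propagate membership in $\calA$ through each link, using at every stage one of the invariant-domain assumptions to control the density and internal energy and Assumption~\ref{Assumption:smoothness_compatibility} to certify that the output of one map lands in the Banach space demanded by the next. The entire argument rests on the two elementary set inclusions $\calC(\bu_0)\subseteq\calA$ and $\calD(\bu_0)\subseteq\calA$, which hold whenever $\bu_0$ is admissible with sufficiently smooth internal energy: once an intermediate state is known to lie in $\calC$ or in $\calD$, it automatically lies in $\calA$.

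First I would treat the innermost half-step $\bu_1\eqq S_1(t_0+\tfrac12\dt,t_0)(\bu_0)$. Since $\bu_0\in\calB_1\cap\calA$, Assumption~\ref{Assumption:StabHyp}(i) gives $\bu_1(\bx)\in\calC(\bu_0)\subseteq\calA$ for \ae $\bx\in\Dom$, and Assumption~\ref{Assumption:smoothness_compatibility}(i) upgrades this to $\bu_1\in\calB_2\cap\calA$. Next, for the parabolic map $\bu_2\eqq S_2(t_0+\dt,t_0)(\bu_1,\bef)$, the inputs $\bu_1\in\calB_2\cap\calA$ and $\bef\in\calB_3$ are exactly those required by Assumption~\ref{Assumption:StabParab}, which yields $\bu_2(\bx)\in\calD(\bu_1)\subseteq\calA$ for \ae $\bx$, while Assumption~\ref{Assumption:smoothness_compatibility}(ii) places $\bu_2\in\calB_1\cap\calA$. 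Finally the outer half-step $\bu_3\eqq S_1(t_0+\dt,t_0+\tfrac12\dt)(\bu_2)$ is handled exactly as the first, now with $\bu_2\in\calB_1\cap\calA$ serving as the initial datum (the hyperbolic system being autonomous, the invariance property applies verbatim with time origin $t_0+\tfrac12\dt$): Assumption~\ref{Assumption:StabHyp}(i) gives $\bu_3(\bx)\in\calC(\bu_2)\subseteq\calA$ for \ae $\bx$, which is the claim.

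The one inclusion that deserves genuine care, and which I expect to be the only real obstacle, is $\calD(\bu_1)\subseteq\calA$. The inclusion $\calC(\bu_0)\subseteq\calA$ is immediate, since the definition of $\calC$ carries the strict bounds $\rho>0$ and $e>0$ on top of the entropy constraint. By contrast $\calD(\bu_1)$ only enforces $e\ge e_{\min}$ with $e_{\min}=\essinf_{\bx}e(\bu_1(\bx))$, and membership $\bu_1\in\calA$ delivers $e(\bu_1)>0$ merely almost everywhere, hence $e_{\min}\ge 0$ rather than the strict bound $e_{\min}>0$ needed for $\calD(\bu_1)\subseteq\calA$. This is precisely the place where the smoothness encoded in $\calB_2$ must be used: because $\bu_1\in\calB_2$ is regular enough that $e(\bu_1)$ attains a genuine positive minimum over the bounded domain $\Dom$, one concludes $e_{\min}>0$ and the chain closes. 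I would therefore record this positivity of the essential infimum as a standing consequence of the functional setting and invoke it at the parabolic link.
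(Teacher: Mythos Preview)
Your argument is correct and follows the same three-step chain as the paper's proof: invoke Assumption~\ref{Assumption:StabHyp}(i) with \ref{Assumption:smoothness_compatibility}(i) for the first hyperbolic half-step, then Assumption~\ref{Assumption:StabParab}(i) with \ref{Assumption:smoothness_compatibility}(ii) for the parabolic step, then repeat the first argument. Your final paragraph is in fact more scrupulous than the paper, which simply asserts $\calD(\cdot)\subset\calA$ without commenting on the strict positivity of $e_{\min}$; the paper implicitly relies on the same regularity-of-$\calB_2$ justification you spell out.
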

\begin{proof}
  By Assumption~\ref{Assumption:StabHyp}\textup{(i)} and of
  Assumption~\ref{Assumption:smoothness_compatibility}\textup{(i)} we have
  $S_1(t_0+\tfrac12\dt,t_0)(\bu_0)\in\calB_2\cap\calC(\bu_0) \subset
  \calB_2\cap\calA$. Similarly, by
  Assumption~\ref{Assumption:StabParab}\textup{(i)} and
  Assumption~\ref{Assumption:smoothness_compatibility}\textup{(ii)} it
  follows that $S_2(t_0+\dt,t_0)\circ
  (S_1(t_0+\tfrac12\dt,t_0)(\bu_0),\bef)\in \calB_1\cap \calD(\bu_0) \subset
  \calB_1\cap \calA$. Finally, the result follows by repeating the first
  argument.
\end{proof}

We now discuss the space and time approximation of the evolution operators
$S_1$ and $S_2$. The two key difficulties to overcome are to ensure that
$\calC(\bu_0)$ remains invariant under the fully discrete version of $S_1$,
and $\calD(\bu_0)$ remains invariant under the fully discrete version of
$S_2$. We describe the discretization of the hyperbolic
step~\eqref{hyperbolic} in \S\ref{Sec:hyperbolic_sub_step}, then we
describe the discretization of the parabolic step~\eqref{parabolic} in
\S\ref{Sec:parabolic_implicit_update}.


\section{Explicit hyperbolic step}
\label{Sec:hyperbolic_sub_step}

In this section we describe the discrete setting that is used to
approximate~\eqref{hyperbolic}. The reader who is familiar with the theory developed in
\cite{Guermond_Nazarov_Popov_Tomas_SISC_2019,Guermond_Popov_Tomas_CMAME_2019}
is invited to skip this section and move on to
\S\ref{Sec:parabolic_implicit_update}.

\subsection{Discrete setting for the space approximation}
\label{Sec:Fin te_element_setting}

For the explicit hyperbolic step we use the exact same setting as described
in~\citep{Guermond_Nazarov_Popov_Tomas_SISC_2019,Guermond_Popov_Tomas_CMAME_2019}.
The method is discretization agnostic and can be implemented with finite
volumes, discontinuous finite elements, and continuous finite elements. To avoid
technicalities when approximating the parabolic problem, we are going to
restrict the presentation to continuous finite elements.
We assume to have at hand a sequence of shape-regular meshes $\famTh$, where
$\calH$ is the index set of the sequence. One may think of $h$ as being the
typical mesh-size. Given some mesh $\calT_h$, we denote by $P(\calT_h)$ a
scalar-valued finite element space with basis functions
$\{\varphi_i\}_{i\in\calV}$. We assume that $P(\calT_h)\subset
C^0(\overline\Dom;\Real)$. We restrict ourselves to continuous Lagrange finite elements
for the sake of simplicity and we assume that $\varphi_i\ge 0$ for all
$i\in \calV$. We denote by $\calV\upbnd$ the set
of the degrees of freedom that are located on the boundary $\front$. The
set $\calV\upint$ is composed of all the interior degrees of freedom. We
introduce the vector-valued approximation space $\bP(\calT_h)\eqq
(P(\calT_h))^{d+2}$. We set
\begin{equation*}
  m_{ij} =\int_\Dom \varphi_i \varphi_j \diff x,\quad
  \bc_{ij} =\int_\Dom \varphi_i \GRAD \varphi_j \diff x,\quad
  \bn_{ij}\eqq\frac{\bc_{ij}}{\|\bc_{ij}\|_{\ell^2}}, \quad
  m_{i} =\int_\Dom \varphi_i\diff x.
\end{equation*}
The definitions of the coefficients
$m_{ij}$, $\bc_{ij}$ and $m_{i}$ for the case of finite volumes and
discontinuous finite element discretizations can be found in
\citep[\S4]{Guermond_Popov_Tomas_CMAME_2019}.

\subsection{Hyperbolic update}
\label{Sec:hyperbolic_update}

Let $t_n$ be some time and $\bu^n\eqq\bu(t_n)$. We now explain how we
approximate the update $S_1(t_{n+1},t_n)(\bu^n)$. First, let $\bu_h^n\eqq
\sum_{i\in \calV}\bsfU_i^n\varphi_i\in \bP(\calT_h)$ be a corresponding
finite element approximation of $\bu^n$. We assume that $\bu_h^n$ is an
admissible state, \ie
\begin{equation*}
  \bsfU_i^n\in \calA, \qquad \forall i\in\calV.
\end{equation*}
Let $\dt$ be the current time step size and set
$t_{n+1}\eqq t_n +\dt$.  Note that $\dt$ has to be chosen for each
time step $t_n$ subject to a suitable hyperbolic CFL condition; see
\eqref{def_dt0}--\eqref{def_of_CFL} and Theorem~\ref{Thm:hyperbolic
  step}. We now construct an approximation
$\bu_h^{n+1} \eqq \sum_{i\in \calV}\bsfU\upnp \varphi_i\in
\bP(\calT_h)$
for the new time step $t_{n+1}$ by combining a low-order approximation
and a high-order approximation through a convex limiting technique
described in
\citep{Guermond_Nazarov_Popov_Tomas_SISC_2019,Guermond_Popov_Tomas_CMAME_2019}.

The low order update is obtained as follows:
\begin{equation*}
  \bsfU_i\upLnp \eqq \bsfU_i^{n}
  + \frac{\dt}{m_i}\sum_{j\in\calI(i)} -\polf(\bsfU_i^{n})\bc_{ij}
  + \frac{\dt}{m_i}\sum_{j\in\calI(i){\setminus}\{i\}} d_{ij}\upLn
  (\bsfU_j^{n} - \bsfU_i^{n}),
\end{equation*}
where $d_{ij}\upLn$ is defined by
\begin{equation}
  \label{Def_of_dij_I}
  d_{ij}^{L,n} := \max\big(\widehat\lambda_{\max}(\bn_{ij},\bsfU_i^n,\bsfU_j^n)
  \|\bc_{ij}\|_{\ell^2},
  \widehat\lambda_{\max}(\bn_{ji},\bsfU_j^n,\bsfU_i^n)
\|\bc_{ji}\|_{\ell^2}\big).
\end{equation}
Here, $\widehat\lambda_{\max}(\bn,\bsfU_L,\bsfU_R)$ is any upper bound on
the maximum wave speed in the Riemann problem with left data $\bsfU_i^n$,
right data $\bsfU_j^n$, and flux $\polf(\bv)\bn_{ij}$. One can use for
instance the two rarefaction approximation discussed in
\cite[Lem.~4.3]{Guermond_Popov_Fast_Riemann_2016} (see also
\cite[Eq.~(4.46)]{Toro_2009}) or any other guaranteed upper bound. For all
$j\in\calI(i){\setminus}\{i\}$ we introduce the auxiliary states
\begin{align}
  \label{def_barstates}%
  \overline{\bsfU}_{ij}^{n} :=
  \frac{1}{2}(\bsfU^{n}_i + \bsfU^{n}_j)
  -(\polf(\bsfU_j^n) - \polf(\bsfU_i^n))\frac{\bc_{ij}}{2 d_{ij}\upLn}.
\end{align}
The following statement is a key result on which the convex limiting
strategy is based.

\begin{lemma}[Invariance of the auxiliary states]\label{Lem:InvarBar}%
  Let $\calU\subset \calA$ be any convex invariant domain for
  \eqref{hyperbolic} such that
  $\bsfU_i^n,\bsfU_j^n\in \calU$. Then the state
  $\overline{\bsfU}_{ij}^{n}$ defined in \eqref{def_barstates} with
  $d_{ij}\upLn$ as defined in \eqref{Def_of_dij_I} belongs to $\calU$.
\end{lemma}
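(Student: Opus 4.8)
The plan is to recognize the auxiliary state $\overline{\bsfU}_{ij}^{n}$ as the spatial average of the entropy solution of a one-dimensional Riemann problem and then to exploit convexity of $\calU$. First I would freeze the direction $\bn_{ij}=\bc_{ij}/\|\bc_{ij}\|_{\ell^2}$ and consider the one-dimensional problem $\partial_t\bu+\partial_x(\polf(\bu)\bn_{ij})=\bzero$ with Riemann data equal to $\bsfU_i^n$ for $x<0$ and $\bsfU_j^n$ for $x>0$; here $\bu$ denotes, by local abuse of notation, the self-similar entropy solution $\bu(x/t)$ of this projected problem. Because $\calU$ is a convex invariant domain for \eqref{hyperbolic} and the directional flux $\polf(\cdot)\bn_{ij}$ is just the restriction of the system to the $\bn_{ij}$ direction, the values of $\bu(x/t)$ remain in $\calU$ for \ae $(x,t)$; this is the usual reduction of the multidimensional invariant-domain property to directional Riemann problems, \cf \cite{Harten_Lax_Levermore_Morokoff_1998,Guermond_Popov_Fast_Riemann_2016}.

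The second step is the space--time averaging identity. Let $\lambda\ge\widehat\lambda_{\max}(\bn_{ij},\bsfU_i^n,\bsfU_j^n)$ be any bound on the fastest wave speed in the fan. Integrating the conservation law over the box $[-\lambda t,\lambda t]\CROSS[0,t]$ and using that the solution still equals the constant states $\bsfU_i^n$ and $\bsfU_j^n$ on the edges $x=-\lambda t$ and $x=+\lambda t$ (no wave reaches them since $\lambda$ bounds all speeds), I obtain
\[
  \frac{1}{2\lambda t}\int_{-\lambda t}^{\lambda t}\bu\big(\tfrac{x}{t}\big)\diff x
  =\tfrac12(\bsfU_i^n+\bsfU_j^n)-\tfrac{1}{2\lambda}\big(\polf(\bsfU_j^n)-\polf(\bsfU_i^n)\big)\bn_{ij}.
\]
The left-hand side is a normalized spatial integral of values of $\bu$, each lying in $\calU$; since $\calU$ is convex and closed, this averaged state lies in $\calU$, and this holds for every admissible $\lambda$.

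The third step is simply to calibrate the free parameter $\lambda$ to the artificial-viscosity coefficient $d_{ij}\upLn$. Choosing $\lambda=d_{ij}\upLn/\|\bc_{ij}\|_{\ell^2}$, definition \eqref{Def_of_dij_I} guarantees $\lambda\ge\widehat\lambda_{\max}(\bn_{ij},\bsfU_i^n,\bsfU_j^n)$, so the averaging bound applies for this choice. Substituting this $\lambda$ together with $\bn_{ij}=\bc_{ij}/\|\bc_{ij}\|_{\ell^2}$ into the identity collapses its right-hand side to
\[
  \tfrac12(\bsfU_i^n+\bsfU_j^n)-\big(\polf(\bsfU_j^n)-\polf(\bsfU_i^n)\big)\frac{\bc_{ij}}{2\,d_{ij}\upLn}
  =\overline{\bsfU}_{ij}^{n},
\]
which is therefore in $\calU$, as claimed.

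I expect the only genuinely delicate point to be the first step, namely the assertion that the pointwise values of the directional Riemann solution stay in $\calU$. This is where the hypothesis that $\calU$ is an \emph{invariant domain for} \eqref{hyperbolic} (in the sense of Assumption~\ref{Assumption:StabHyp}) is used, combined with the standard fact that an invariant domain of the system is an invariant domain of each one-dimensional projected Riemann problem; everything after that is routine algebra. A secondary point worth a one-line remark is that the result is insensitive to the symmetrizing $\max$ in \eqref{Def_of_dij_I}: the second argument can only increase $d_{ij}\upLn$, hence $\lambda$, and a larger box simply enlarges the convex-averaging region, which remains inside $\calU$.
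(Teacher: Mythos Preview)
The paper states Lemma~\ref{Lem:InvarBar} without proof, relying on the prior works \cite{Guermond_Nazarov_Popov_Tomas_SISC_2019,Guermond_Popov_Tomas_CMAME_2019,Guermond_Popov_Fast_Riemann_2016} where this bar-state construction originates. Your argument is exactly the standard one from those references: interpret $\overline{\bsfU}_{ij}^{n}$ as the average over $[-\lambda t,\lambda t]$ of the self-similar entropy solution of the one-dimensional Riemann problem in the direction $\bn_{ij}$, use that $d_{ij}\upLn/\|\bc_{ij}\|_{\ell^2}$ dominates the maximal wave speed so the fan does not reach the box edges, and conclude by convexity of $\calU$. The proof is correct and there is nothing to compare against in the present paper.

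One small remark: you invoke that $\calU$ is closed when passing from pointwise membership to membership of the integral average. The lemma as stated only says ``convex''; strictly speaking the averaging argument places $\overline{\bsfU}_{ij}^{n}$ in the closure of $\calU$. In practice the invariant sets of interest (such as $\calC(\bu_0)$ with the entropy constraint $s\ge s_{\min}$) are intersections of half-spaces and sublevel sets and the argument goes through, but if you want the statement airtight you should either add ``closed'' to the hypothesis or note that the conclusion is $\overline{\bsfU}_{ij}^{n}\in\overline{\calU}$.
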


A possibly invariant-domain-violating and formally high-order solution,
$\bu_h\upHnp$, is obtained by appropriately reducing the graph viscosity and
replacing the lumped mass matrix by the full mass matrix (see,
\eg~\citep[\S3.3-\S3.4]{Guermond_Nazarov_Popov_Tomas_SISC_2019} and
\citep[\S6]{Guermond_Popov_Tomas_CMAME_2019}). The final high-order
invariant-domain-preserving update $\bu_h^{n+1}$ is obtained by applying convex
limiting  between the low-order solution
$\bsfU_i\upLnp$ and the high-order solution $\bsfU_i\upHnp$ with relaxed bounds. The
local bounds are computed using the auxiliary states~\eqref{def_barstates}
(see \eg \citep[\S4]{Guermond_Nazarov_Popov_Tomas_SISC_2019} and
\citep[\S7]{Guermond_Popov_Tomas_CMAME_2019}). In the numerical illustrations reported
at the end of the paper we limit the density from above and from below and the
specific entropy from below. The relaxation technique for the bounds is
explained in \citep[\S4.7]{Guermond_Nazarov_Popov_Tomas_SISC_2019} and
\citep[\S7.6]{Guermond_Popov_Tomas_CMAME_2019}. For further reference we
introduce
\begin{equation} \label{def_dt0}
  \dt_0(\bu_h^n)\eqq \min_{i\in \calV}\frac{m_i}{2 |d_{ii}\upLn|},
  \qquad \text{with}\qquad
  d_{ii}\upLn \eqq-\sum_{j\in\calI(i){\setminus}\{i\}} d_{ij}\upLn.
\end{equation}
The ratio $\dt/\dt_0(\bu_h^n)$ is henceforth denoted $\text{CFL}$ and called Courant-Friedrichs-Lewy number:
\begin{equation}
  \text{CFL}\eqq \frac{\dt}{\dt_0(\bu_h^n)}. \label{def_of_CFL}
\end{equation}
Let $S_{1h}(t_n+\dt,t_n): \bP(\calT_h) \to \bP(\calT_h)$ denote the
nonlinear operator defined by setting $S_{1h}(t_n+\dt,t_n)(\bu_h^n)\eqq
\bu_h^{n+1}$. The key result regarding the hyperbolic update is the
following.

\begin{theorem}[Invariance]
  \label{Thm:hyperbolic step}
  Let $\bu_h^n\in\calA$ and let $\calC(\bu_h^n)$ be as defined in
  \eqref{def_of_calB}.
  \begin{itemize}
    \item[\textup{(i)}]
      If no relaxation is applied on the entropy bounds, then
      $S_{1h}(t_n+\dt,t_n)(\bu_h^n)\in \calC(\bu_h^n)$ for all $\dt \le
      \dt_0(\bu_h^n)$. In other words, $\calC(\bu_h^n)$ is invariant under
      $S_{1h}(t_n+\dt,t_n)$ if $\textup{CFL}\le 1$.
    \item[\textup{(ii)}]
      In case of relaxation of the entropy bounds in the convex limiter,
      there exists $c(h)$ with $\lim_{h\to 0} c(h) =1$ and $s_{\min} \ge c(h)s_{\min}$ so that
      the same statement holds with the constraint $s(\rho,e)\ge s_{\min}$
      in \eqref{def_of_calB} replaced by $s(\rho,e)\ge c(h)s_{\min}$.
    \item[\textup{(iii)}]
      In both cases $\calA$ is invariant under $S_{1h}(t_n+\dt,t_n)$
      provided that $\dt \le \dt_0(\bu_h^n)$.
    \end{itemize}
\end{theorem}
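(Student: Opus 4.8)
The plan is to show that the low-order state $\bsfU_i\upLnp$ is a convex combination of $\bsfU_i^n$ and the auxiliary states $\overline{\bsfU}_{ij}^n$, invoke Lemma~\ref{Lem:InvarBar} with $\calU=\calC(\bu_h^n)$ to place every state of the combination in $\calC(\bu_h^n)$, and finally transfer the conclusion to the limited update $\bsfU_i\upnp$ through the convexity of the bounds enforced by the convex limiter. First I would establish the algebraic identity: using the partition-of-unity property $\sum_{j\in\calI(i)}\bc_{ij}=\bzero$ to recast the flux term into increments $(\polf(\bsfU_j^n)-\polf(\bsfU_i^n))\bc_{ij}$, and inserting the definition \eqref{def_barstates} of the auxiliary states together with \eqref{def_dt0}, the low-order update rewrites as
\[
  \bsfU_i\upLnp = \Big(1-\tfrac{2\dt}{m_i}\!\!\sum_{j\in\calI(i)\setminus\{i\}}\!\!d_{ij}\upLn\Big)\bsfU_i^n + \tfrac{2\dt}{m_i}\!\!\sum_{j\in\calI(i)\setminus\{i\}}\!\!d_{ij}\upLn\,\overline{\bsfU}_{ij}^n.
\]
Since $d_{ij}\upLn\ge 0$ for $j\ne i$ and $\sum_{j\ne i}d_{ij}\upLn=-d_{ii}\upLn=|d_{ii}\upLn|$, the leading weight equals $1-\tfrac{2\dt}{m_i}|d_{ii}\upLn|$, which is nonnegative exactly when $\dt\le m_i/(2|d_{ii}\upLn|)$; minimizing over $i\in\calV$ this is the condition $\dt\le\dt_0(\bu_h^n)$, i.e., $\textup{CFL}\le 1$. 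Hence under the stated CFL restriction $\bsfU_i\upLnp$ is a genuine convex combination.

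Next I would verify that $\calC(\bu_h^n)$ defined in \eqref{def_of_calB} is a \emph{convex} invariant domain of \eqref{hyperbolic}: the constraint $\rho>0$ is a half-space; the constraint $e>0$ is the super-level set of the concave map $(\rho,\bbm,E)\mapsto E-\tfrac12\|\bbm\|_{\ell^2}^2/\rho$ and is therefore convex; and the minimum-principle constraint $s(\rho,e)\ge s_{\min}$ is convex by the concavity of the specific entropy and invariant by the minimum entropy principle (the standard facts cited via \cite{Harten_Lax_Levermore_Morokoff_1998}). Applying Lemma~\ref{Lem:InvarBar} with $\calU=\calC(\bu_h^n)$ then gives $\overline{\bsfU}_{ij}^n\in\calC(\bu_h^n)$ for each $j\in\calI(i)\setminus\{i\}$, and since $\bsfU_i^n\in\calC(\bu_h^n)$ and the set is convex, the representation above yields $\bsfU_i\upLnp\in\calC(\bu_h^n)$.

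To conclude part~\textup{(i)} I would pass from the low-order to the limited update. The convex limiter returns $\bsfU_i\upnp$ inside the intersection of the local bounds with the low-order state; because these bounds (density from above and below, specific entropy from below) carve out a convex subset of $\calC(\bu_h^n)$ that contains $\bsfU_i\upLnp$, the limited state again lies in $\calC(\bu_h^n)$, which is precisely the convex limiting guarantee of \citep{Guermond_Nazarov_Popov_Tomas_SISC_2019,Guermond_Popov_Tomas_CMAME_2019}. For part~\textup{(ii)}, the relaxation procedure only lowers the entropy floor to $c(h)s_{\min}$ with $c(h)\to 1$, so the entire argument applies verbatim with $s_{\min}$ replaced by $c(h)s_{\min}$. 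Part~\textup{(iii)} is then immediate: in the non-relaxed case $\calC(\bu_h^n)\subset\calA$ gives the claim, while in the relaxed case one checks that $\rho>0$ and $e>0$ persist, positivity of $\rho$ being enforced directly by the density bounds and $e>0$ following from $s(\rho,e)\ge c(h)s_{\min}$ together with the density bounds.

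The main obstacle I anticipate is not the convex-combination bookkeeping of the first step, which is standard, but the clean transfer through the convex limiter while tracking the relaxed entropy bound: one must verify that relaxing $s_{\min}$ to $c(h)s_{\min}$ keeps the limited states strictly inside $\calA$, i.e., that the relaxed entropy floor still forces $e>0$ uniformly in $h$. This is the point at which the precise structure of the equation of state enters, and it is what ultimately pins down the admissible range of $c(h)$.
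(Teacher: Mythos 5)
Your argument is correct and coincides with the one the paper relies on: the paper states Theorem~\ref{Thm:hyperbolic step} without an explicit proof, deferring to \citep{Guermond_Nazarov_Popov_Tomas_SISC_2019,Guermond_Popov_Tomas_CMAME_2019}, and your reconstruction (rewriting the low-order update as a convex combination of $\bsfU_i^n$ and the bar states under $\textup{CFL}\le 1$, invoking Lemma~\ref{Lem:InvarBar} with the convex invariant set $\calC(\bu_h^n)$, then transferring to the limited update via the convexity of the local bounds) is precisely the standard argument developed in those references. The only cosmetic remarks are that you silently correct the paper's typo $\polf(\bsfU_i^n)$ to $\polf(\bsfU_j^n)$ in the flux term (necessary for the bar-state identity to hold) and that boundary degrees of freedom, where $\sum_{j\in\calI(i)}\bc_{ij}\ne\bzero$, require the separate treatment given in the cited works.
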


\begin{remark}[Second-order in time] \label{Rem:SSP}
  In practice the method is made second-order accurate in time by using a
  strong stability preserving explicit Runge Kutta method. For instance it
  is sufficient to use SSPRK(2,2) (\ie Heun's scheme) to achieve
  second-order accuracy in time. This is done as follows: one computes
  $\bw_h^1=S_{1h}(t_n+\dt,t_n)(\bu_h^n)$ and
  $\bw_h^2=S_{1h}(t_n+2\dt,t_n+\dt)(\bw_h^1)$ and one sets $\bu_h^{n+1} =
  \frac12 \bu_h^n + \frac12 \bw_h^2.$
\end{remark}


\section{Implicit parabolic step} \label{Sec:parabolic_implicit_update}

We now describe the discrete setting that is used to approximate the
parabolic step~\eqref{parabolic}. We use the same finite element setting
that was introduced in \S\ref{Sec:Fin te_element_setting}.

\subsection{Density and velocity update}

Let again $\bu_h^n\eqq \sum_{i\in \calV}\bsfU_i^n\varphi_i\in \bP(\calT_h)$
be a finite element approximation of $\bu^n$. We assume that $\bu_h^n$ is
an admissible state, \ie
\begin{equation}
  \bsfU_i^n\in \calA, \qquad \forall i\in\calV.
\end{equation}
Let $\dt$ be the chosen hyperbolic time step size (see
\S\ref{Sec:hyperbolic_sub_step}) for $t_n$. We now construct an
approximation $\bu_h^{n+1} = \sum_{i\in\calV} \bsfU^{n+1}_i\varphi_i$ of
$S_2(t_n+\dt,t_n)(\bu^n, \bef)$ as follows. Since the evolution equation for
the density in~\eqref{parabolic} is $\partial_t\rho=0$, the density is
updated by setting
\begin{equation}
  \varrho_i^{n+1} \eqq \varrho_i^n,\qquad \forall i\in\calV.
  \label{mass_parabolic_discrete}
\end{equation}
Next, the velocity $\bv^n$ has to be updated. For this, we introduce the
bilinear form associated with viscous dissipation,
\begin{align*}
  a(\bv,\bw) \eqq \int_\Dom\pols(\bv){:}\pole(\bw) \diff x, \qquad
  \bv,\bw\in \bH^1_0(\Dom)\eqq H_0^1(\Dom;\Real^{d}).
\end{align*}
Let $\{\be_k\}_{k\in\intset{1}{d}}$ be the canonical Cartesian basis of
$\Real^d$.  For any $i\in\calV$ and $j\in\calI(i)$ we define the
$d\CROSS d$ matrix $\polB_{ij}\in \Real^{d\CROSS d}$ by setting
\begin{equation}
  (\polB_{ij})_{kl} \eqq a(\varphi_j \be_l,\varphi_i \be_k) \eqq  \int_\Dom
  \pols(\varphi_j \be_l){:} \GRADs(\varphi_i \be_k) \diff x,\qquad \forall
  k,l\in\intset{1}{d}.
\end{equation}
Let $\bef_h^{n+\frac12}\eqq\sum_{j\in\calV}\bsfF_j^{n+\frac12}\varphi_j \in
\bP(\calT_h)$ be an approximation of $\bef(t_n+\frac12 \dt)$ (at least
second-order accurate in time and space). We use the Crank-Nicolson
technique to compute $\bu_h^{n+1}$. More precisely we solve for the unknown
$\bsfV^{n+\frac{1}{2}}$ given by the following linear system:
\begin{subequations}
  \label{parabolic_discrete}
  \begin{equation}
    \label{mt_parabolic_discrete}
    \begin{cases}
      \varrho^{n}_i m_i \bsfV^{n+\frac{1}{2}} +
      \tfrac12 \dt\sum_{j\in\calI(i)} \polB_{ij} \bsfV^{n+\frac{1}{2}} =
      m_i \bsfM_i^{n} + \tfrac12 \dt m_i \bsfF_i^{n+\frac12},
      & \forall i\in \calV\upint
      \\[0.3em]
      \bsfV_i^{n+\frac{1}{2}} = \bzero, &  \forall i\in \calV\upbnd,
    \end{cases}
  \end{equation}
  where $\bsfU^n_i\qqe (\varrho^n_i, \bM^n_i, E^n_i)$, and set
  \begin{equation}
    \label{vel_parabolic_discrete}
    \bsfV_i^{n+1} \eqq 2 \bsfV^{n+\frac{1}{2}} - \bsfV_i^{n},\qquad
    \bsfM_i^{n+1} \eqq \varrho^{n+1}_i\bsfV_i^{n+1},\qquad \forall i\in\calV.
  \end{equation}
\end{subequations}
We then introduce $\bv_h^{n+\frac{1}{2}} \eqq \sum_{i\in \calV}
\bsfV_i^{n+\frac{1}{2}}\varphi_i$ and define
\begin{equation}
  \label{def_Ki_nplusone}
  \sfK_i^{n+\frac{1}{2}} \eqq \frac{1}{m_i}\int_\Dom
  \pols(\bv^{n+\frac{1}{2}}){:}\pole(\bv^{n+\frac{1}{
  2}}) \varphi_i \diff x,
  \qquad \forall i\in\calV.
\end{equation}
Notice that $\sum_{i\in\calV} m_i \sfK_i^{n+\frac{1}{2}} =
a(\bv^{n+\frac{1}{2}},\bv^{n+\frac{1}{2}})$ owing to the partition
of unity property. The main properties of the above definitions are summarized
in the following result.
\begin{lemma}[Velocity update]
  \textup{(i)} For every $i\in\calV$ we have $\sfK_i^{n+\frac{1}{2}} \ge 0$.
  \textup{(ii)}
  The following global energy balance holds true:
  \begin{equation}
    \label{kinetic_energy_balance}
    \sum_{i\in\calV}\tfrac12 m_i\varrho^{n}_i (\bsfV_i^{n+1})^2
    + \dt a(\bv^{n+\frac{1}{2}},\bv^{n+\frac{1}{2}}) =
    \sum_{i\in\calV}\tfrac12 m_i\varrho^{n}_i(\bsfV_i^{n})^2
    + \sum_{i\in\calV} \dt
    m_i\bsfF_i^{n+\frac12}\SCAL\bsfV_i^{n+\frac{1}{2}}.
  \end{equation}
\end{lemma}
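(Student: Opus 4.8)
The two parts are independent and I would dispatch them separately. For \textup{(i)} the plan is simply to show that the integrand defining $\sfK_i^{n+\frac{1}{2}}$ in \eqref{def_Ki_nplusone} is pointwise nonnegative. Since $\pols(\bv)$ is symmetric, its contraction with the skew-symmetric part of $\GRAD\bv$ vanishes, so $\pols(\bv^{n+\frac12}){:}\pole(\bv^{n+\frac12}) = \pols(\bv^{n+\frac12}){:}\GRAD\bv^{n+\frac12}$, and the dissipation estimate \eqref{dissipation_s_e} yields $\pols(\bv^{n+\frac12}){:}\pole(\bv^{n+\frac12}) \ge 2\mu(1-k)\|\pole(\bv^{n+\frac12})\|_{\ell^2}^2 \ge 0$. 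Because $\varphi_i \ge 0$ by assumption, the integrand $\pols(\bv^{n+\frac12}){:}\pole(\bv^{n+\frac12})\varphi_i$ is nonnegative, hence so is its integral; this gives $\sfK_i^{n+\frac12}\ge 0$ at essentially no cost beyond invoking the preceding lemma.

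For \textup{(ii)} the idea is a discrete energy test. I would take the Euclidean inner product of each interior equation in \eqref{mt_parabolic_discrete} with $\bsfV_i^{n+\frac12}$ and sum over $i\in\calV\upint$. Since $\bsfV_i^{n+\frac12}=\bzero$ for $i\in\calV\upbnd$, every term carrying a boundary index (whether through the outer index $i$ or the inner index $j$) drops out, so all three sums can be freely extended to all of $\calV$. The crucial identification is that the stiffness contribution collapses onto the bilinear form: using $(\polB_{ij})_{kl}=a(\varphi_j\be_l,\varphi_i\be_k)$ together with the bilinearity of $a$, one obtains $\sum_{i\in\calV}\sum_{j\in\calI(i)}\bsfV_i^{n+\frac12}\SCAL\polB_{ij}\bsfV_j^{n+\frac12} = a(\bv^{n+\frac12},\bv^{n+\frac12})$, the boundary condition guaranteeing $\bv^{n+\frac12}\in\bH^1_0(\Dom)$ so that $a$ is well defined. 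After substituting $\bsfM_i^n=\varrho_i^n\bsfV_i^n$ on the right-hand side, this produces the intermediate identity
\begin{equation*}
  \sum_{i\in\calV}\varrho_i^n m_i (\bsfV_i^{n+\frac12})^2 + \tfrac12\dt\,a(\bv^{n+\frac12},\bv^{n+\frac12}) = \sum_{i\in\calV}m_i\varrho_i^n\bsfV_i^n\SCAL\bsfV_i^{n+\frac12} + \tfrac12\dt\sum_{i\in\calV}m_i\bsfF_i^{n+\frac12}\SCAL\bsfV_i^{n+\frac12}.
\end{equation*}

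The final step converts this into the stated balance of discrete kinetic energies using the defining relation $\bsfV_i^{n+1}=2\bsfV_i^{n+\frac12}-\bsfV_i^n$ from \eqref{vel_parabolic_discrete}, which states that $\bsfV_i^{n+\frac12}$ is the arithmetic mean of $\bsfV_i^{n+1}$ and $\bsfV_i^n$. Moving the cross term to the left, the inertial part becomes $\sum_{i\in\calV}m_i\varrho_i^n\bsfV_i^{n+\frac12}\SCAL(\bsfV_i^{n+\frac12}-\bsfV_i^n)$, and the Crank--Nicolson polarization identity $\bsfV_i^{n+\frac12}\SCAL(\bsfV_i^{n+\frac12}-\bsfV_i^n)=\tfrac14\big((\bsfV_i^{n+1})^2-(\bsfV_i^n)^2\big)$ rewrites it as $\tfrac14\sum_{i\in\calV}m_i\varrho_i^n\big((\bsfV_i^{n+1})^2-(\bsfV_i^n)^2\big)$. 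Multiplying the equation by $2$ and rearranging the energy difference then reproduces \eqref{kinetic_energy_balance} verbatim.

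The only genuinely delicate point I expect is the boundary bookkeeping: the momentum equation is imposed only at interior nodes, so one must justify extending all three sums to $\calV$ and, in particular, argue that the off-diagonal stiffness couplings to boundary nodes vanish because $\bsfV_j^{n+\frac12}=\bzero$ there. This is precisely what lets the quadratic form close to $a(\bv^{n+\frac12},\bv^{n+\frac12})$ and is the hinge of the argument; the remainder is routine bilinear algebra and the telescoping polarization identity.
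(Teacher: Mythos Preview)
Your proposal is correct and follows essentially the same route as the paper: part \textup{(i)} via the dissipation estimate \eqref{dissipation_s_e} and $\varphi_i\ge 0$, and part \textup{(ii)} via the discrete energy test of dotting \eqref{mt_parabolic_discrete} with the midpoint velocity, identifying the stiffness sum with $a(\bv^{n+\frac12},\bv^{n+\frac12})$, and using the Crank--Nicolson polarization $\bsfV_i^{n+\frac12}=\tfrac12(\bsfV_i^{n+1}+\bsfV_i^n)$. The only cosmetic difference is ordering: the paper multiplies by $2\bsfV_i^{n+\frac12}$ and records the node-wise identity already in kinetic-energy form (then checks it holds trivially at boundary nodes since $\bsfV_i^{n+1}=-\bsfV_i^n$ there) before summing, whereas you sum first in the $\bsfV^{n+\frac12}$ variables and convert afterward; both handle the boundary correctly.
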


\begin{proof}
  \textup{(i)} The inequality $\sfK_i^{n+\frac{1}{2}} \ge 0$ is a
  consequence of~\eqref{dissipation_s_e} and $\varphi_i\ge 0$.
  \textup{(ii)} We take the dot product of \eqref{mt_parabolic_discrete}
  with $2 \bsfV_i^{n+\frac{1}{2}}$ and recalling that $\bsfV^{n+\frac{1}{2}} = \frac12(\bsfV_i^{n+1} + \bsfV_i^{n})$
we obtain for every $i\in\calV\upint$
  \begin{align*}
    \tfrac12 m_i\varrho^{n}_i (\bsfV_i^{n+1})^2
    + \dt a(\bv^{n+\frac{1}{2}}, \bsfV_i^{n+\frac{1}{2}}\varphi_i)
    = \tfrac12 m_i\varrho^{n}_i(\bsfV_i^{n})^2
    + \dt m_i\bsfF_i^{n+\frac12}\SCAL \bsfV_i^{n+\frac{1}{2}}.
  \end{align*}
  For every $i\in\calV\upbnd$ we have $\bsfV_i^{n+\frac{1}{2}} =\bzero$,
  which in turn implies that $\bsfV_i^{n+1} = -\bsfV_i^n$, \ie
  $(\bsfV_i^{n+1})^2 = (\bsfV_i^n)^2$. Moreover, we have
  $a(\bv^{n+\frac{1}{2}}, \bsfV_i^{n+\frac{1}{2}}\varphi_i)=0$ and
  $\bsfF_i^{n+\frac12}\SCAL\bsfV_i^{n+\frac{1}{2}} =0$. Hence, for every
  $i\in\calV\upbnd$ we have
  \begin{align*}
    \tfrac12 m_i\varrho^{n}_i (\bsfV_i^{n+1})^2
    + \dt a(\bv^{n+\frac{1}{2}},\bsfV_i^{n+\frac{1}{2}}\varphi_i)
    = \tfrac12 m_i\varrho^{n}_i(\bsfV_i^{n})^2
    + \dt m_i\bsfF_i^{n+\frac12}\SCAL \bsfV_i^{n+\frac{1}{2}}.
  \end{align*}
  Summing over $i\in\calV$ and using the partition of unity property
  ($\sum_{i\in\calV} \varphi_i=1$) yields~\eqref{kinetic_energy_balance}.
\end{proof}

\begin{remark}[Approximation order]
  The update $\bsfV_i^{n+1}$ constructed by \eqref{parabolic_discrete} is
  formally second-order accurate in time and space since
  \eqref{mt_parabolic_discrete} is a Crank-Nicolson time step.
\end{remark}


\subsection{Internal energy update (first-order)}

The update of the internal energy entails some subtleties regarding the
minimum principle when using the second-order Crank-Nicolson time stepping.
Therefore, we first formulate the method with the backward Euler time
stepping. The second-order extension is presented in~\S\ref{Sec:energy_CN}.
Let us introduce the bilinear form associated with the thermal diffusion
\begin{align*}
  b(e,w) \eqq c_v^{-1}\kappa \int_\Dom \GRAD e\SCAL \GRAD w \diff x,\qquad
  \forall e,w\in H^1(\Dom).
\end{align*}
For any $i\in\calV$ and $j\in\calI(i)$ we set
\begin{equation}
  \beta_{ij}\eqq b(\varphi_j,\varphi_i).
\end{equation}
Notice that the partition of unity property implies that $\beta_{ii} =
-\sum_{j\in\calI(i){\setminus}\{i\}}\beta_{ij}$. This implies in particular
that for all $v_h\eqq \sum_{j\in \calV} \sfV_j \varphi_j \in P(\calT_h)$ we
have
\begin{equation}
  b(v_h,\varphi_i) = \sum_{j\in\calI(i){\setminus}\{i\}} \beta_{ij}
  (\sfV_j-\sfV_i).
\end{equation}
This expression will be useful to prove the minimum principle on the
internal energy. We further assume that
\begin{equation}
  \beta_{ij}\le 0, \quad \forall i\ne j \in \calV.
\end{equation}
This condition is known to be satisfied for meshes composed of simplices in
two and three space dimensions under the so-called acute
angle condition, \cf \eg \cite[\S5.2]{BrKoK:08},
\cite[Eq.~(2.5)]{XuZik:99}. This is in particular true for Delaunay meshes.
Although it can be done, it is not the purpose of this paper to relax this
condition.

Recalling the viscous dissipation $\sfK_i^{n+\frac{1}{2}}$ defined in
\eqref{def_Ki_nplusone}, we now construct a low-order update of the
internal energy $\sfe_i\upLnp$ as follows. For all $i\in\calV$ first
set $\sfe_i^{n}\eqq (\varrho^{n}_i)^{-1} E_i^n -
\tfrac12\|\bsfV_i^{n}\|_{\ell^2}^2$, then
solve the linear system
\begin{align}
  \label{low_int_energy_2}
  & m_i \varrho_i^{n}(\sfe_i\upLnp - \sfe_i^{n})+ \dt \sum_{j\in\calI(i)}
  \beta_{ij}\sfe_j\upLnp =  \dt m_i\sfK_i^{n+\frac{1}{2}},
  \qquad \forall i\in \calV.
\end{align}
Recall that the boundary conditions \eqref{parabolic_internal_energy_simplified}
together with the partition of unity property imply that
\begin{equation}
  \label{internal_energy_identity}
  \sum_{i\in\calV} m_i \varrho_i^{n}(\sfe_i\upLnp - \sfe_i^{n}) =
  \dt\sum_{i\in\calV} m_i\sfK_i^{n+\frac{1}{2}}
  =\dt a(\bv^{n+\frac{1}{2}},\bv^{n+\frac{1}{2}}).
\end{equation}
This identity is used in the proof of Theorem~\ref{Thm:parabolic_energy}.
\begin{lemma}[Minimum principle]
  \label{lem:parabolic_energy_low}
  Let $\bsfU^{n}$ be an admissible state.  Then for all $\dt>0$:
  \begin{align*}
    \min_{j\in\calV}\sfe_j\upLnp \ge \min_{j\in\calV} (\sfe_j^n
    + \tfrac{\dt}{\varrho_j^{n}} \sfK_j^{n+\frac{1}{2}})
    \ge \min_{j\in\calV} \sfe_j^n \ge 0.
  \end{align*}
\end{lemma}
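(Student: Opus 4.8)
The plan is to prove the three inequalities from right to left, reserving the real work for the leftmost one. Starting from the right, the bound $\min_{j\in\calV}\sfe_j^n \ge 0$ is immediate from the admissibility hypothesis $\bsfU_i^n\in\calA$, which forces $\sfe_i^n = e(\bsfU_i^n) > 0$ for every $i\in\calV$. The middle inequality $\min_{j\in\calV}(\sfe_j^n + \tfrac{\dt}{\varrho_j^n}\sfK_j^{n+\frac{1}{2}}) \ge \min_{j\in\calV}\sfe_j^n$ is equally cheap: by part \textup{(i)} of the preceding lemma we have $\sfK_j^{n+\frac{1}{2}} \ge 0$, and since $\dt > 0$ and $\varrho_j^n > 0$ the correction $\tfrac{\dt}{\varrho_j^n}\sfK_j^{n+\frac{1}{2}}$ is nonnegative node by node, so dropping it can only decrease each entry and hence the minimum.

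The substance is the leftmost inequality $\min_{j\in\calV}\sfe_j\upLnp \ge \min_{j\in\calV}(\sfe_j^n + \tfrac{\dt}{\varrho_j^n}\sfK_j^{n+\frac{1}{2}})$, which I would establish by a discrete minimum-principle argument. First I would recast the scheme \eqref{low_int_energy_2} in incremental form: inserting the partition-of-unity identity $\beta_{ii} = -\sum_{j\in\calI(i)\setminus\{i\}}\beta_{ij}$ turns the diffusion term into $\sum_{j\in\calI(i)}\beta_{ij}\sfe_j\upLnp = \sum_{j\in\calI(i)\setminus\{i\}}\beta_{ij}(\sfe_j\upLnp - \sfe_i\upLnp)$, so that \eqref{low_int_energy_2} reads
\begin{equation*}
  m_i\varrho_i^n\sfe_i\upLnp + \dt\!\!\sum_{j\in\calI(i)\setminus\{i\}}\!\!\beta_{ij}(\sfe_j\upLnp - \sfe_i\upLnp) = m_i\varrho_i^n\big(\sfe_i^n + \tfrac{\dt}{\varrho_i^n}\sfK_i^{n+\frac{1}{2}}\big).
\end{equation*}
Before extracting sign information I would note that this linear system is uniquely solvable: the system matrix has strictly positive diagonal $m_i\varrho_i^n + \dt\beta_{ii}$, nonpositive off-diagonal entries $\dt\beta_{ij}$ (using $\beta_{ij}\le 0$), and strictly positive row sums equal to $m_i\varrho_i^n$ (since $\sum_{j\in\calI(i)}\beta_{ij}=0$), hence it is a strictly diagonally dominant M-matrix, so $\sfe_i\upLnp$ is well defined.

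Finally I would run the standard extremum argument. Let $i^\star \in \calV$ be a node at which $\sfe_{i^\star}\upLnp = \min_{j\in\calV}\sfe_j\upLnp$. Then $\sfe_j\upLnp - \sfe_{i^\star}\upLnp \ge 0$ for all $j$, and since $\beta_{i^\star j}\le 0$ every term of $\sum_{j\ne i^\star}\beta_{i^\star j}(\sfe_j\upLnp - \sfe_{i^\star}\upLnp)$ is nonpositive. Evaluating the incremental identity at $i = i^\star$ and discarding this nonpositive contribution yields $m_{i^\star}\varrho_{i^\star}^n\sfe_{i^\star}\upLnp \ge m_{i^\star}\varrho_{i^\star}^n(\sfe_{i^\star}^n + \tfrac{\dt}{\varrho_{i^\star}^n}\sfK_{i^\star}^{n+\frac{1}{2}})$, and dividing by $m_{i^\star}\varrho_{i^\star}^n > 0$ gives $\min_{j\in\calV}\sfe_j\upLnp = \sfe_{i^\star}\upLnp \ge \sfe_{i^\star}^n + \tfrac{\dt}{\varrho_{i^\star}^n}\sfK_{i^\star}^{n+\frac{1}{2}} \ge \min_{j\in\calV}(\sfe_j^n + \tfrac{\dt}{\varrho_j^n}\sfK_j^{n+\frac{1}{2}})$, which is exactly the claim. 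I do not anticipate a genuine obstacle here; the only points requiring care are the choice of the minimizing node and the bookkeeping of signs, and the single structural input that makes the whole argument work is the acute-angle condition $\beta_{ij}\le 0$, without which the off-diagonal terms would not carry a definite sign.
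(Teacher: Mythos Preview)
Your proof is correct and follows essentially the same route as the paper: rewrite \eqref{low_int_energy_2} in incremental form using $\sum_{j\in\calI(i)}\beta_{ij}=0$, evaluate at a minimizing index, and exploit $\beta_{ij}\le 0$ together with $\sfK_i^{n+\frac12}\ge 0$. The only addition you make is the explicit M-matrix remark guaranteeing solvability of the linear system, which the paper leaves implicit.
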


\begin{proof}
  Recalling that $\sum_{j\in\calI(i)}\beta_{ij}=0$, we infer that
  \begin{align*}
    m_i \varrho_i^{n}(\sfe_i\upLnp - \sfe_i^{n})
    +\dt \sum_{j\in\calI(i){\setminus}\{i\}}
    \beta_{ij}(\sfe_j\upLnp -\sfe_i\upLnp)
    =  \dt m_i\sfK_i^{n+\frac{1}{2}},
  \end{align*}
  Let $i$ be the index in $\calV$ where $\sfe_i\upLnp$ is minimal. Then
  $0\ge \sum_{j\in\calI(i){\setminus}\{i\}} \beta_{ij}(\sfe_j\upLnp
  -\sfe_i\upLnp)$ because we have assumed that $\beta_{ij}\le 0$ for all
  $j\in\calI(i){\setminus}\{i\}$. Moreover, the definition of
  $\sfK_i^{n+\frac{1}{2}}$ implies that $\sfK_i^{n+\frac{1}{2}}\ge 0$
  since we assumed $\varphi_i\ge 0$. All this implies that
  \begin{align*}
    m_i \varrho_i^{n}(\sfe_i\upLnp - \sfe_i^{n}) & \ge m_i
    \varrho_i^{n}(\sfe_i\upLnp - \sfe_i^{n})
    + \dt \sum_{j\in\calI(i){\setminus}\{i\}}
    \beta_{ij}(\sfe_j\upLnp -\sfe_i\upLnp)
    =  \dt m_i\sfK_i^{n+\frac{1}{2}}\ge 0.
  \end{align*}
  In conclusion
  $\min_{j\in\calV}\sfe_j\upLnp \qqe \sfe_i\upLnp \ge
  \sfe_i^{n}+\tfrac{\dt}{\varrho_i^{n}}\sfK_i^{n+\frac{1}{2}} \ge
  \min_{j\in\calV}
  \big(\sfe_j^{n}+\tfrac{\dt}{\varrho_j^{n}}\sfK_j^{n+\frac{1}{2}}
  \big)$.
\end{proof}


\subsection{Internal energy update (Second-order)}
\label{Sec:energy_CN}

We now explain how to approximate the internal energy with a second-order
Crank-Nicolson time stepping scheme. This is done by combining the
low-order update and the second-order update using flux-corrected transport
limiting (FCT); the reader is referred to \eg \cite{Boris_books_JCP_1973,
Zalesak_1979, KuzminLoehnerTurek2004}.

We start by defining the high-order update of the internal energy,
$\sfe_i\upHnp,$ as follows: We first compute
$\sfe_i{\upHnph}$ by solving
\begin{align}
  \label{high_int_energy_CN}
  & m_i \varrho_i^{n}(\sfe_i{\upHnph} - \sfe_i^{n})+\tfrac12\dt
  \sum_{j\in\calI(i)} \beta_{ij}\sfe_i{\upHnph}
  = \tfrac12 \dt m_i\sfK_i^{n+\frac{1}{2}}, \qquad \forall i\in \calV.
\end{align}
and then set
\begin{equation*}
  \sfe_i\upHnp = 2\sfe_i{\upHnph} - \sfe_i^n, \qquad \forall i\in \calV.
\end{equation*}
In general, positivity properties for Crank-Nicolson schemes can only be
guaranteed under highly restrictive time-step size constraints. We do not
assume that such time-step conditions are met. We just assume that the time-step size
is dictated by the CFL constraints of the hyperbolic part. We thus
resort to flux-corrected transport limiting, or alternatively convex
limiting, to preserve positivity properties. Rewriting
\eqref{high_int_energy_CN} by multiplying \eqref{high_int_energy_CN} by 2
and replacing $\sfe_i\upHnph$ by
$\frac12(\sfe_i\upHnp+\sfe_i^n)$ gives:
\begin{align}
  \label{high_int_energy_2}
  & m_i \varrho_i^{n}(\sfe_i\upHnp - \sfe_i^{n})+\tfrac12\dt
  \sum_{j\in\calI(i)} \beta_{ij}(\sfe_j\upHnp  + \sfe_j^{n})
  =  \dt m_i\sfK_i^{n+\frac{1}{2}}, \qquad \forall i\in \calV.
\end{align}
We then take the difference between \eqref{high_int_energy_2} and
\eqref{low_int_energy_2} to obtain
\begin{align*}
  & m_i \varrho_i^{n}(\sfe_i\upHnp - \sfe_i\upLnp) =
  -\tfrac12\dt
  \sum_{j\in\calI(i)} \beta_{ij}(\sfe_j\upHnp + \sfe_j^{n} - 2
  \sfe_j\upLnp).
\end{align*}
Setting $A_{ij}\eqq -\tfrac12\dt \beta_{ij}(\sfe_j\upHnp-\sfe_i\upHnp +
\sfe_j^{n} -\sfe_i^{n} - 2 \sfe_j\upLnp+2 \sfe_i\upLnp)$, the above
identity reads
\begin{align*}
  m_i \varrho_i^{n}(\sfe_i\upHnp - \sfe_i\upLnp) =
  \sum_{j\in\calI(i){\setminus}\{i\}}A_{ij}.
\end{align*}
Introducing $\sfe^{n,\min}\eqq\min_{j\in \calV} \sfe_j\upn$ we then define the
FCT limiter coefficients as follows:
\begin{subequations}
  \begin{align}
    & P_i^-\eqq \sum_{j\in\calI(i){\setminus}\{i\}} \min(A_{ij},0),
    && Q_i^-\eqq m_i\varrho_i^n (\sfe^{n,\min} -\sfe_i\upLnp),
    \label{fct_internal_energy_1}
    \\
    & \limiter_i^+ = 1,
    && \limiter_i^-\eqq \min\big(1,\tfrac{Q_i^-}{P_i^-}\big).
    \label{fct_internal_energy_2}
  \end{align}
\end{subequations}
Note that $P_i^- \leq 0$ and $Q_i^- \leq 0$ (owing to
Lemma~\ref{lem:parabolic_energy_low}), therefore $\limiter_i^- \geq 0$. By
virtue of the definition of $\limiter_i^-$ the inequality $\limiter_i^-
P_i^- \geq Q_i^-$ always holds true:
\begin{align}
  \label{CoreFCTbound}
  \limiter_i^- P_i^- = \min\big(1,\tfrac{Q_i^-}{P_i^-}\big) P_i^- =
  - \min\big(1,\tfrac{Q_i^-}{P_i^-}\big) |P_i^-| =
  - \min(|P_i^-|,-Q_i^-) \geq Q_i^-
\end{align}
The high-order update of the internal energy is now defined by setting
\begin{equation}
  \label{internal_energy_parabolic_discrete}
  m_i \varrho_i^{n}(\sfe_i\upnp - \sfe_i\upLnp) =
  \sum_{j\in\calI(i){\setminus}\{i\}} \limiter_{ij} A_{ij}, \qquad
  \limiter_{ij} \eqq
  \begin{cases}
    \min(\limiter_i^{+},\limiter_j^{-}),& \text{if $A_{ij}\ge 0$},\\
    \min(\limiter_i^{-},\limiter_j^{+}),& \text{if $A_{ij}<0 $}.
  \end{cases}
\end{equation}

\begin{lemma}[Minimum principle]
  \label{Lem:internal_energy_parabolic_discrete}
  The quantity $\sfe^{n+1}$ computed in
  \eqref{internal_energy_parabolic_discrete} satisfies
  \begin{equation}
    \label{Eq:Lem:internal_energy_parabolic_discrete}
    \min_{j\in\calV} \sfe_j^{n+1} \ge \sfe^{n,\min}\eqq\min_{j\in \calV}
    \sfe_j\upn.
  \end{equation}
\end{lemma}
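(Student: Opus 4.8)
The plan is to run the standard Zalesak flux-corrected-transport argument and reduce everything to the core bound \eqref{CoreFCTbound}. The target inequality $\sfe_i^{n+1}\ge\sfe^{n,\min}$ is equivalent, after multiplying by the positive weight $m_i\varrho_i^n$, to showing $m_i\varrho_i^n(\sfe_i^{n+1}-\sfe^{n,\min})\ge 0$ for every $i\in\calV$. First I would rewrite the limited update \eqref{internal_energy_parabolic_discrete} as
\begin{align*}
  m_i\varrho_i^n(\sfe_i^{n+1}-\sfe^{n,\min})
  = m_i\varrho_i^n(\sfe_i\upLnp-\sfe^{n,\min})
  + \sum_{j\in\calI(i){\setminus}\{i\}}\limiter_{ij}A_{ij},
\end{align*}
and observe that the first term on the right equals $-Q_i^-$ by the definition of $Q_i^-$ in \eqref{fct_internal_energy_1}; moreover $-Q_i^-\ge 0$, since the low-order minimum principle (Lemma~\ref{lem:parabolic_energy_low}) gives $\sfe_i\upLnp\ge\sfe^{n,\min}$. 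Thus it suffices to prove the flux bound $\sum_{j}\limiter_{ij}A_{ij}\ge Q_i^-$.

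The second step is to bound the limited antidiffusive sum from below by splitting it according to the sign of $A_{ij}$. For indices with $A_{ij}\ge 0$ the summands are nonnegative (because $\limiter_{ij}\ge 0$) and may simply be discarded. For indices with $A_{ij}<0$ the limiter reduces to $\limiter_{ij}=\min(\limiter_i^-,\limiter_j^+)=\limiter_i^-$, since $\limiter_j^+=1\ge\limiter_i^-$, so these summands equal $\limiter_i^- A_{ij}$. Collecting the negative fluxes and recalling $P_i^-=\sum_j\min(A_{ij},0)$ from \eqref{fct_internal_energy_1}, this yields
\begin{align*}
  \sum_{j\in\calI(i){\setminus}\{i\}}\limiter_{ij}A_{ij}
  \ge \limiter_i^-\sum_{j:\,A_{ij}<0}A_{ij}
  = \limiter_i^- P_i^-.
\end{align*}

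Finally I would invoke the core FCT bound \eqref{CoreFCTbound}, namely $\limiter_i^- P_i^-\ge Q_i^-$, which was established from $\limiter_i^-=\min(1,Q_i^-/P_i^-)$ together with the sign facts $P_i^-\le 0$ and $Q_i^-\le 0$. Chaining the two displayed inequalities gives $\sum_j\limiter_{ij}A_{ij}\ge Q_i^-$, hence $m_i\varrho_i^n(\sfe_i^{n+1}-\sfe^{n,\min})\ge -Q_i^-+Q_i^-=0$, and dividing by $m_i\varrho_i^n>0$ concludes the argument. The only place requiring genuine care---and the main obstacle---is the sign bookkeeping in the second step: one must verify that for incoming fluxes ($A_{ij}<0$) the active limiter is node $i$'s own lower coefficient $\limiter_i^-$ rather than $\limiter_j^+$, and track the signs of $P_i^-$ and $Q_i^-$ so that discarding the positive part genuinely only helps. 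Once that is pinned down, the estimate is a direct one-sided telescoping that needs no mass-matrix lumping or CFL restriction.
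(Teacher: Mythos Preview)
Your proof is correct and follows essentially the same route as the paper's: both bound the limited antidiffusive sum from below by $\limiter_i^- P_i^-$ (discarding nonnegative fluxes, using that for $A_{ij}<0$ the active limiter is $\limiter_{ij}=\min(\limiter_i^-,\limiter_j^+)=\limiter_i^-$ since $\limiter_j^+=1$), then invoke the core bound \eqref{CoreFCTbound} and the low-order minimum principle to conclude. The only cosmetic difference is that you subtract $\sfe^{n,\min}$ at the outset and identify the resulting term as $-Q_i^-$, whereas the paper chains the inequalities starting from $\sfe_i^{n+1}-\sfe_i\upLnp$ and reads off $\sfe_i^{n+1}\ge\sfe^{n,\min}$ at the end.
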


\begin{proof}
  The above definitions imply
  \begin{align*}
    m_i \varrho_i^{n}(\sfe_i\upnp - \sfe_i\upLnp) &\ge
    \sum_{j\in\calI(i){\setminus}\{i\}} \limiter_{ij} \min(A_{ij},0) \ge
    \limiter_{i}^{-}  \sum_{j\in\calI(i){\setminus}\{i\}}\min(A_{ij},0)
    = \limiter_{i}^{-} P_i^- \ge Q_i^-,
  \end{align*}
  where we have used that $\limiter_{ij} \leq \limiter_{i}^{-}$, the
  definition of $P_i^-$, and the inequality \eqref{CoreFCTbound}. This shows
  that the limiting enforces $m_i \varrho_i^{n} \sfe_i^{n+1} \ge m_i
  \varrho_i^{n} \sfe^{n,\min}$, \ie $\sfe_i^{n+1} \ge \sfe^{n,\min}$. This
  in turn implies that $\min_{i\in\calV}\sfe_i^{n+1} \ge \sfe^{n,\min} =
  \min_{j\in\calV}\sfe_j\upn.$
\end{proof}


\subsection{Total energy update}

Once the internal energy is updated according
to~\eqref{internal_energy_parabolic_discrete}, the total energy can be
updated by setting
\begin{equation}
  \label{energy_parabolic_discrete}
  E_i^{n+1} = \varrho_i^{n+1}\sfe_i^{n+1} +
  \tfrac12\varrho^{n}_i \|\bsfV_i^{n+1}\|_{\ell^2}^2,\qquad \forall i\in \calV.
\end{equation}
The main result of \S\ref{Sec:parabolic_implicit_update} is the following.
\begin{theorem}[Positivity and conservation]
  \label{Thm:parabolic_energy}
  Let $\bsfU^{n}$ be an admissible state. Let $\bsfU^{n+1}$ be the stated
  constructed by \eqref{mass_parabolic_discrete} -
  \eqref{vel_parabolic_discrete} - \eqref{energy_parabolic_discrete}, with
  the velocity update defined in \eqref{parabolic_discrete} and the
  internal energy update defined
  in~\eqref{internal_energy_parabolic_discrete}.
  Then, $\bsfU^{n+1}$ is an admissible state, \ie $\bsfU_i^{n+1}\in\calA$
  for all $i\in \calV$ and all $\dt$, and the following holds for all
  $i\in\calV$ and all $\dt$:
  \begin{subequations}
    \label{eq:lem:parabolic_energy}
    \begin{align}
      \varrho_i^{n+1} &= \varrho_i^n > 0, \qquad \forall i\in\calV,
      \label{mass:lem:parabolic_energy}
      \\
      \min_{j\in\calV} \sfe_j^{n+1}
      & \ge \min_{j\in\calV} \sfe_j^{n} > 0,
      \\
      \sum_{i\in\calV} m_i \sfE_i^{n+1}
      & = \sum_{i\in\calV} m_i \sfE_i^{n}
      + \sum_{i\in\calV} \dt m_i\bsfF_i^{n+\frac12}\SCAL
      \bsfV_i^{n+\frac{1}{2}}.
      \label{eq2:lem:parabolic_energy}
    \end{align}
  \end{subequations}
\end{theorem}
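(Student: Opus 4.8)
The plan is to establish the three identities in \eqref{eq:lem:parabolic_energy} in turn and then deduce admissibility from them; the first two are immediate. Identity \eqref{mass:lem:parabolic_energy} is nothing but the update rule \eqref{mass_parabolic_discrete}, and the positivity $\varrho_i^n>0$ is part of the hypothesis $\bsfU_i^n\in\calA$. The minimum principle $\min_j\sfe_j^{n+1}\ge\min_j\sfe_j^n$ is exactly Lemma~\ref{Lem:internal_energy_parabolic_discrete}, and $\min_j\sfe_j^n>0$ again follows from $\bsfU_i^n\in\calA$, since $\sfe_i^n=(\varrho_i^n)^{-1}E_i^n-\tfrac12\|\bsfV_i^n\|_{\ell^2}^2=e(\bsfU_i^n)>0$.

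The heart of the proof is the global energy balance \eqref{eq2:lem:parabolic_energy}. First I would show that the FCT limiting is conservative for the internal energy, i.e.\ $\sum_i m_i\varrho_i^n\sfe_i^{n+1}=\sum_i m_i\varrho_i^n\sfe_i^{L,n+1}$. Summing \eqref{internal_energy_parabolic_discrete} over $i\in\calV$ reduces this to showing $\sum_{i\in\calV}\sum_{j\in\calI(i)\setminus\{i\}}\limiter_{ij}A_{ij}=0$. The key observation is that $A_{ij}=-A_{ji}$ (because $\beta_{ij}=\beta_{ji}$ while the bracketed combination of nodal values is antisymmetric under $i\leftrightarrow j$) and that the limiter is symmetric, $\limiter_{ij}=\limiter_{ji}$ (a direct check of the two branches in \eqref{internal_energy_parabolic_discrete}, using that $A_{ij}$ and $A_{ji}$ carry opposite signs). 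Pairing the $(i,j)$ and $(j,i)$ contributions then cancels the double sum termwise. Combining this with the identity \eqref{internal_energy_identity} and $\varrho_i^{n+1}=\varrho_i^n$ yields
\begin{equation*}
  \sum_{i\in\calV}m_i\varrho_i^{n+1}\sfe_i^{n+1}
  =\sum_{i\in\calV}m_i\varrho_i^n\sfe_i^n+\dt\,a(\bv^{n+\frac12},\bv^{n+\frac12}).
\end{equation*}

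To finish I would sum the total-energy reconstruction \eqref{energy_parabolic_discrete} over $i$ and substitute the global kinetic-energy balance \eqref{kinetic_energy_balance} for the term $\sum_i\tfrac12 m_i\varrho_i^n\|\bsfV_i^{n+1}\|_{\ell^2}^2$. The viscous-dissipation contributions $\dt\,a(\bv^{n+\frac12},\bv^{n+\frac12})$ produced by the internal-energy and the kinetic-energy balances then appear with opposite signs and cancel; this cancellation is precisely the discrete analogue of the exchange between mechanical and internal energy and is the reason the reformulation \eqref{parabolic_simplified} was chosen. Reassembling the remaining nodal terms via $\varrho_i^n\sfe_i^n+\tfrac12\varrho_i^n\|\bsfV_i^n\|_{\ell^2}^2=E_i^n$ (which is just the definition of $\sfe_i^n$) into $\sum_i m_i E_i^n$ gives \eqref{eq2:lem:parabolic_energy}.

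Admissibility is then a corollary of the first two identities: $\varrho_i^{n+1}=\varrho_i^n>0$, and by \eqref{vel_parabolic_discrete}--\eqref{energy_parabolic_discrete} the reconstructed state satisfies $e(\bsfU_i^{n+1})=(\varrho_i^{n+1})^{-1}E_i^{n+1}-\tfrac12\|\bsfV_i^{n+1}\|_{\ell^2}^2=\sfe_i^{n+1}\ge\min_j\sfe_j^n>0$, so $\bsfU_i^{n+1}\in\calA$ for every $i\in\calV$ and every $\dt$. I expect the only delicate point to be the symmetry/antisymmetry bookkeeping in the conservation step; once that is secured the remainder is algebraic substitution, and no time-step restriction is ever invoked because the implicit scheme together with the limiter are unconditionally positivity preserving.
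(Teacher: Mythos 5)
Your proposal is correct and follows essentially the same route as the paper's proof: density positivity from the update rule, the minimum principle from the FCT limiting lemma, conservation of the limited internal-energy update via the antisymmetry $A_{ij}=-A_{ji}$ together with the limiter symmetry $\limiter_{ij}=\limiter_{ji}$, then the identity \eqref{internal_energy_identity}, and finally adding the kinetic-energy balance \eqref{kinetic_energy_balance} so that the viscous-dissipation terms cancel. Your writeup is in fact slightly more careful than the paper's in two minor respects: you cite Lemma~\ref{Lem:internal_energy_parabolic_discrete} (rather than the low-order Lemma~\ref{lem:parabolic_energy_low}) for the limited update's minimum principle, and you spell out why $e(\bsfU_i^{n+1})=\sfe_i^{n+1}$, which hinges on $\varrho_i^{n+1}=\varrho_i^n$.
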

\begin{proof}
  \textup{(i)} Since by assumption $\bsfU^{n}_i\in\calA$, we have
  $\varrho_i^n > 0$, whence $\varrho_i^{n+1} > 0$. \\
  \textup{(ii)} We have proved that $\min_{j\in\calV} e_j^{n+1} \ge
  \min_{j\in\calV} e_j^{n}\ge 0$ in Lemma~\ref{lem:parabolic_energy_low}. \\
  \textup{(iii)} We have established in \eqref{kinetic_energy_balance} that
  \begin{align}
    \label{proof_ke_balance}
    \sum_{i\in\calV}\tfrac12 m_i\varrho^{n}_i (\bsfV_i^{n+1})^2
    + \dt a(\bv^{n+\frac{1}{2}},\bv^{n+\frac{1}{2}})
    = \sum_{i\in\calV}\tfrac12 m_i\varrho^{n}_i(\bsfV_i^{n})^2
    + \sum_{i\in\calV} \dt m_i\bsfF_i^{n+\frac12}\SCAL \bsfV_i^{n+\frac{1}{2}}.
  \end{align}
  Recalling that $A_{ij} =-A_{ji}$ and $\limiter_{ij}=\limiter_{ji}$, we
  sum~\eqref{internal_energy_parabolic_discrete} over $i\in\calV$ and
  obtain
  \begin{align*}
    \sum_{i\in\calV} m_i \varrho_i^n\sfe_i^{n+1} = \sum_{i\in\calV} m_i
    \varrho_i^n \sfe_i\upLn.
  \end{align*}
  Invoking the identity~\eqref{internal_energy_identity} shows
  \begin{align}
    \label{proof_se_balance}
    \sum_{i\in\calV} m_i \varrho_i^n \sfe_i^{n+1}
    = \sum_{i\in\calV} m_i \varrho_i^n\sfe_i^{n}
    + \dt a(\bv^{n+\frac{1}{2}},\bv^{n+\frac{1}{2}}).
  \end{align}
  Adding \eqref{proof_ke_balance} and \eqref{proof_se_balance}
  gives~\eqref{eq2:lem:parabolic_energy}.
\end{proof}

We introduce a discrete nonlinear solution operator $S_{2h}(t_n+\dt,t_n):
\bP(\calT_h)\CROSS\bP(\calT_h)  \to \bP(\calT_h)$ by setting
$S_{2h}(t_n+\dt,t_n)(\bu_h^n,\bef_h^{n+\frac12})\eqq \bu_h^{n+1}$.
Theorem~\ref{Thm:parabolic_energy} can then be rephrased as follows.

\begin{corollary}[Invariance]
  \label{Cor:parabolic_invariance}
  Let $\bu_h\in \bP(\calT_h)\cap\calA$ and let $\bef_h^{n+\frac12}\in
  \bP(\calT_h)$. Then $\calD(\bu_h^n)$ is invariant under
  $S_{2h}(t_n+\dt,t_n)$ for all $\dt$, \ie
  $S_{2h}(t_n+\dt,t_n)(\bu_h,\bef_h^{n+\frac12}) \in \calD(\bu_h^n)\subset
  \calA$ for all $\dt>0$.
\end{corollary}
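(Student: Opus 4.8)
The plan is to recognize that Corollary~\ref{Cor:parabolic_invariance} is, up to a translation of notation, an immediate consequence of Theorem~\ref{Thm:parabolic_energy}. The only genuine bridge to build is between the essential infimum $e_{\min}=\essinf_{\bx\in\Dom}e(\bu_h^n(\bx))$ appearing in the definition of $\calD(\bu_h^n)$ and the nodal minimum $\min_{j\in\calV}\sfe_j^n$ that Theorem~\ref{Thm:parabolic_energy} actually controls. The hypothesis $\bu_h^n\in\bP(\calT_h)\cap\calA$ is exactly the nodal admissibility $\bsfU_i^n\in\calA$ required to apply that theorem.

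First I would establish the identity $e_{\min}=\min_{j\in\calV}\sfe_j^n$. Since $\bu_h^n\in C^0(\overline\Dom;\Real^{d+2})$ and $e$ is continuous on $\calA$, the map $\bx\mapsto e(\bu_h^n(\bx))$ is continuous, so its essential infimum is a genuine infimum bounded above by its value at any Lagrange node $\bx_j$, namely $e(\bu_h^n(\bx_j))=e(\bsfU_j^n)=\sfe_j^n$; hence $e_{\min}\le\min_{j\in\calV}\sfe_j^n$. For the reverse inequality I would use that the superlevel sets $\{(\rho,\bbm,E)\st\rho>0,\ e\ge c\}$ are convex, because $(\rho,\bbm,E)\mapsto\rho e=E-\tfrac12\|\bbm\|_{\ell^2}^2/\rho$ is concave on $\rho>0$. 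As $\varphi_i\ge0$ and $\sum_{i\in\calV}\varphi_i=1$, the state $\bu_h^n(\bx)$ is at a.e.\ $\bx$ a convex combination of the admissible nodal states $\bsfU_i^n$, so $e(\bu_h^n(\bx))\ge\min_{i\in\calV}\sfe_i^n$, giving $e_{\min}\ge\min_{j\in\calV}\sfe_j^n$. Together these yield the identity, and since there are finitely many nodes with $\sfe_j^n>0$ we also get $e_{\min}>0$, which already shows $\calD(\bu_h^n)\subset\calA$.

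Next I would simply invoke Theorem~\ref{Thm:parabolic_energy}, which gives $\varrho_i^{n+1}=\varrho_i^n>0$ and $\min_{j\in\calV}\sfe_j^{n+1}\ge\min_{j\in\calV}\sfe_j^n=e_{\min}$ for all $\dt$. Consequently every updated nodal state satisfies $\varrho_i^{n+1}>0$ and $\sfe_i^{n+1}\ge e_{\min}$, i.e.\ $\bsfU_i^{n+1}\in\calD(\bu_h^n)$ for all $i\in\calV$. To upgrade nodal membership to membership of the full finite element function $\bu_h^{n+1}=\sum_{i\in\calV}\bsfU_i^{n+1}\varphi_i$, I would reuse the convexity of $\calD(\bu_h^n)$ together with the nonnegative partition of unity: at a.e.\ $\bx$ the value $\bu_h^{n+1}(\bx)$ is a convex combination of states in $\calD(\bu_h^n)$, hence lies in $\calD(\bu_h^n)$, which proves $S_{2h}(t_n+\dt,t_n)(\bu_h^n,\bef_h^{n+\frac12})\in\calD(\bu_h^n)\subset\calA$. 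I do not anticipate any substantive obstacle: the entire analytical weight is borne by Theorem~\ref{Thm:parabolic_energy}, and the only point demanding a moment's care is the identification of $e_{\min}$ with the nodal minimum, which rests on the continuity of $\bu_h^n$ and the concavity of $\rho e$.
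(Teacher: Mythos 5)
Your proposal is correct and takes essentially the same route as the paper: the paper gives no separate proof, presenting the corollary as a direct rephrasing of Theorem~\ref{Thm:parabolic_energy}, which is exactly where all the analytical weight lies in your argument as well. The only material you add---identifying $e_{\min}$ with the nodal minimum $\min_{j\in\calV}\sfe_j^n$ and passing from nodal to pointwise membership via convexity of $\calD(\bu_h^n)$ and the nonnegative partition of unity---is a careful filling-in of bookkeeping that the paper leaves implicit.
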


\begin{remark}[Definition of $\sfe^{\min}$]
  The definition of $\sfe^{\min}$ in \eqref{fct_internal_energy_1} can be
  slightly strengthened. The lower
  bound~\eqref{Eq:Lem:internal_energy_parabolic_discrete} holds for any
  number $\sfe^{\min}$ chosen in the interval $[\min_{j\in \calV} \sfe_j^n,
  \min_{j\in \calV} \sfe_j\upLn]$. However, selecting $\sfe^{\min}$ too
  close to $\min_{j\in \calV} \sfe_j\upLn$ degenerates the accuracy order
  of the method to $\calO(\dt)$ in the $L^\infty(\Dom)$-norm. The numerical
  experiments reported in the paper are computed with $\sfe^{\min}\eqq
  \min_{j\in \calV} \sfe_j^n$.
\end{remark}

\begin{remark}[Energy]
Lemma~\ref{Lem:internal_energy_parabolic_discrete} establishes that the
minimum of the internal energy grows monotonically and
Theorem~\ref{Thm:parabolic_energy} states that the temporal variation of
the total energy is equal to the power of the sources. This implies in
essence that a fully discrete counterpart of
\eqref{total_energy_conservation} holds true, which is exactly what one
should expect.
\end{remark}


\section{Complete method}
\label{Sec:complete_method}

We now put all the pieces together and state the main ressult of the paper.
Let $S_{1h}^{(2)}$ be a version of $S_{1h}$ that is at least second-order
accurate in time as discussed in Remark~\ref{Rem:SSP}. Let $\bu_h^n\in
\bP(\calT_h)$ be an admissible state and let $\bef_h^{n+\frac12}\in
\bP(\calT_h)$.  Let us fix some number $\textup{CFL}>0$, which we call Courant-Friedrichs-Lewy number,
 and let $\dt_0(\bu_h^n)$ be defined in \eqref{def_dt0}. The time step
$\dt$ is chosen by setting
\begin{equation}
\dt \eqq \textup{CFL} \CROSS \dt_0(\bu_h^n). \label{def_of_dt_complete_method}
\end{equation}
The update $\bu_h^{n+1}
\in\bP(\calT_h)$ is computed as follows:
\begin{equation}
  \bu_h^{n+1} = S_{1h}^{(2)}(t_n+\dt,t_n+\tfrac12\dt)
  \circ S_{2h}(t_n+\dt,t_n)
  \circ (S_{1h}^{(2)}(t_n+\tfrac12\dt,t_n)(\bu_h^n),\bef_h^{n+\frac12}).
\end{equation}

\begin{theorem}[Invariance]
  \label{Thm:main}
  Let $\bu_h^n\in \bP(\calT_h)\cap\calA$ and $\bef_h^{n+\frac12}\in
  \bP(\calT_h)$. Then $\bu_h^{n+1}\in \calA$ provided $\textup{CFL}$ is
  small enough.%
\end{theorem}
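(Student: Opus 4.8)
The plan is to replicate, at the discrete level, the composition argument of the Lemma following~\eqref{continuous_Strang}, reading off invariance from the three discrete results already in hand: Theorem~\ref{Thm:hyperbolic step} for the explicit hyperbolic operator, Corollary~\ref{Cor:parabolic_invariance} for the implicit parabolic operator, and the convex-combination structure of the SSP Runge--Kutta operator $S_{1h}^{(2)}$ from Remark~\ref{Rem:SSP}. The one genuine simplification relative to the continuous template is that no analogue of Assumption~\ref{Assumption:smoothness_compatibility} is needed: every intermediate quantity already lives in the finite-dimensional space $\bP(\calT_h)$, so the three maps compose without any functional-analytic overhead. Since $\calA$ is characterized nodally ($\bsfU_i\in\calA$ for all $i\in\calV$), the whole argument reduces to tracking nodal values through the three stages.

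First I would record that $\calA$ is convex: the constraint $\rho>0$ is linear, and $e(\bu)>0$ is equivalent to $E>\tfrac12\rho^{-1}\|\bbm\|_{\ell^2}^2$, i.e. $\bu$ lies above the graph of a convex (perspective) function of $(\rho,\bbm)$. This convexity is precisely what lets the SSP operator $S_{1h}^{(2)}$ inherit $\calA$-invariance from the forward-Euler operator $S_{1h}$: in the construction of Remark~\ref{Rem:SSP} the two internal stages are plain forward-Euler hyperbolic updates, each mapping $\calA$ into $\calA$ by Theorem~\ref{Thm:hyperbolic step}\textup{(iii)} as long as its own sub-step size does not exceed $\dt_0$ of its own input, while the final reconstruction $\tfrac12\bu_h^n+\tfrac12\bw_h^2$ is a nodewise convex combination of admissible states and hence admissible.

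With this in place I would chain the three stages exactly as in the continuous Lemma. The first application $S_{1h}^{(2)}(t_n+\tfrac12\dt,t_n)(\bu_h^n)$ lands in $\calA$; the parabolic operator then maps it into $\calD(\cdot)\subset\calA$ by Corollary~\ref{Cor:parabolic_invariance}, and crucially with \emph{no} restriction on $\dt$; a final application of $S_{1h}^{(2)}$ over $[t_n+\tfrac12\dt,t_n+\dt]$ again keeps us in $\calA$. Composing yields $\bu_h^{n+1}\in\calA$.

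The hard part is the CFL bookkeeping, which is exactly what the qualifier ``$\textup{CFL}$ small enough'' absorbs. The step is frozen at $\dt=\textup{CFL}\CROSS\dt_0(\bu_h^n)$ using the wave speeds of the \emph{initial} state, yet each forward-Euler sub-stage must satisfy a CFL condition measured against $\dt_0$ of \emph{its own} intermediate input — the stage-one output and the post-parabolic state both differ from $\bu_h^n$. I would close this by a boundedness argument: every intermediate state along the composition stays in a fixed compact set $\mathcal{K}\subset\calA$, because the invariance results bound density and internal energy from below, the energy identities~\eqref{kinetic_energy_balance} and~\eqref{eq2:lem:parabolic_energy} bound the kinetic and total energy (the forcing contributing only an $\calO(\dt)$ change over one step), and the explicit stages perturb the state by $\calO(\dt)$. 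On $\mathcal{K}$ the guaranteed wave-speed bound $\widehat\lambda_{\max}$, hence $|d_{ii}\upLn|$, is uniformly bounded, so $\dt_0$ admits a uniform lower bound $\underline{\dt}_0>0$ over all intermediate states; choosing $\textup{CFL}$ small enough that $\tfrac12\dt\le\underline{\dt}_0$ then validates every sub-stage condition at once. The subtlety to handle carefully is the mild circularity (admissibility of a state is used to bound its wave speeds, which in turn certifies the next sub-step); this is resolved by observing that as $\textup{CFL}\to0$ each stage converges to the identity, so below a threshold the intermediate states never leave a prescribed neighborhood of $\bu_h^n$ inside $\calA$, closing the loop.
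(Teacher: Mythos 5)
Your proposal is correct, and its core is exactly the paper's argument: compose the three discrete invariance results --- Theorem~\ref{Thm:hyperbolic step} for each hyperbolic half-step and Corollary~\ref{Cor:parabolic_invariance} for the parabolic step, the latter with no restriction on $\dt$. Where the two diverge is in the handling of the qualifier ``$\textup{CFL}$ small enough.'' The paper's proof merely records the conditions it needs ($\textup{CFL}\le 2$ for the first half-step with SSPRK(2,2)/SSPRK(3,3), nothing for the parabolic step, and $\tfrac{\dt}{2}\le \dt_0(\bw_h)$, \ie $\textup{CFL}\le 2\dt_0(\bw_h)/\dt_0(\bu_h^n)$, for the second half-step) and stops there; since $\bw_h$ depends on $\dt$, this is an a posteriori, implicit condition, and the remark following the theorem explicitly defers any uniform control of $\dt_0(\bw_h)/\dt_0(\bu_h^n)$ as an open problem. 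You attempt more, namely an a priori threshold, and your closing device --- as $\textup{CFL}\to 0$ each stage converges to the identity, so the intermediate states stay in a fixed neighborhood of $\bu_h^n$ inside $\calA$ on which $\dt_0(\cdot)$ is bounded below --- is a legitimate way to get this at fixed mesh and fixed $\bu_h^n$: it needs continuity in $\dt$ of the limited updates and of $\dt_0(\cdot)$, which holds for the cited constructions because the limited update is an $\calO(\dt)$ perturbation of $\bu_h^n$ sandwiched between low- and high-order updates, and the guaranteed wave-speed bound is continuous on $\calA$. Two cautions. First, your compact-set argument, as stated, is the weak link: the invariance results give only $\rho>0$, $e>0$ and minimum principles (membership in $\calC(\bu_h^n)$, whose closure is \emph{not} contained in $\calA$), not a quantitative distance to $\partial\calA$ uniform over intermediates; quantitative density bounds would have to be pulled from the limiter's local bar-state bounds. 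This flaw is harmless only because your neighborhood argument supersedes the compactness argument entirely. Second, keep the claim at fixed $h$: a threshold uniform in the mesh size is precisely what the paper's subsequent remark declares to be beyond reach, so your ``fixed compact set'' rhetoric should not be read as contradicting (or resolving) that open problem.
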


\begin{proof}
  From Theorem~\ref{Thm:hyperbolic step} we infer that
  $S_{1h}^{(2)}(t_n+\tfrac12\dt,t_n)(\bu_h^n)\in\calA$ if $\textup{CFL}$ is small
  enough. For example, for the SSPRK(2,2) and SSPRK(3,3) methods this holds
  with $\textup{CFL}=2$. From Corollary~\ref{Cor:parabolic_invariance} we
  infer that $\bw_h\eqq S_{2h}(t_n+\dt,t_n)\big(
  S_{1h}^{(2)}(t_n+\tfrac12\dt,t_n)(\bu_h^n,\bef_h^{n+\frac12}) \big) \,\in\,
  \calA$ without any further restriction on $\dt$. Using again
  Theorem~\ref{Thm:hyperbolic step} we infer that
  $S_{1h}^{(2)}(t_n+\dt,t_n+\tfrac12\dt)(\bw_h) \in \calA$ provided
  $\frac{\dt}{2} \le \dt_0(\bw_h)$, \ie $\textup{CFL}\le 2
  \dt_0(\bw_h)/\dt_0(\bu_h^n)$.
\end{proof}

\begin{remark}[\textup{CFL}]
Showing that Theorem~\ref{Thm:main} holds with a CFL number that is
uniform with respect to the mesh size, \ie $\dt_0(\bw_h)/\dt_0(\bu_h^n)$
can be bounded uniformly, would necessitate to prove some uniform bounds
on $\bw_h$. Except under very
restrictive smallness assumptions on data, to the best of our knowledge this is  a very challenging open
problem that is well beyond the scope of the present paper.
\end{remark}


\section{Numerical illustration}
\label{Sec:Numerical_illustration}

We illustrate the approximation technique with a number of convergence
tests and a computation of a shocktube benchmark problem.

\subsection{Implementation details}

All the tests reported below are done with the ideal gas equation of state,
$s(\rho,e) = \log(e^{\frac{1}{\gamma-1}}\rho^{-1})$, with $\gamma = 1.4$.
This in turn implies that $p=(\gamma-1)\rho e$, as well as $c_p =
\frac{\gamma}{\gamma-1}$, and $c_v = \frac{1}{\gamma-1}$. We also assume that the ratio
$\frac{\mu c_p}{\kappa}\qqe P_r$, called Prandtl number, is constant.
Hence $c_v^{-1} \kappa = P_r^{-1} \frac{c_p}{c_v}\mu = \frac{\gamma}{P_r} \mu$.
The bulk viscosity $\lambda$ is set to $0$.

All the computations are done with continuous $\polP_1$ elements. The
high-order method uses the entropy viscosity commutator described in
\citep[(3.15)--(3.16)]{Guermond_Nazarov_Popov_Tomas_SISC_2019} with the
entropy $\rho s$.  Upper and lower bounds on the density are enforced by
using the method described in
\citep[\S4.4]{Guermond_Nazarov_Popov_Tomas_SISC_2019}. The relaxation of
the bounds on the density is done by using the technique described in
\citep[\S4.7]{Guermond_Nazarov_Popov_Tomas_SISC_2019}. The minimum
principle on the specific entropy $\exp((\gamma-1)s) \ge
\exp((\gamma-1)s^{\min})$ is enforced by proceeding as in
\citep[\S4.6]{Guermond_Nazarov_Popov_Tomas_SISC_2019} with the constraint
$\Psi(\bsfU):= \rho e - \varrho^{\min} \rho^\gamma\ge 0$. The lower bound
on the specific entropy for all $i\in\calV$ is set with $\varrho^{\min}_i
\eqq \min_{j\in \calI(i)} \rho_i^n e_i^n/(\rho_i^n)^\gamma$ and
further relaxed by using
\citep[Eq.~(4.14)]{Guermond_Nazarov_Popov_Tomas_SISC_2019}. The positivity
of the internal energy is guaranteed by the minimum principle on the
specific entropy, \ie no limiting on the internal energy is done.
High-performance implementations of the hyperbolic solver are
available in form of open source software documented
in \cite{maier2020massively,MaierTomas2020}.

The demonstration code used here has not been parallelized. The linear
system are solved by using the preconditioned CG version of PARDISO
(phase=23). The solution tolerance is set to $10^{-10}$ (parm(4)=102). The
reader is referred to \cite{Cosmin_Schenk_Lubin_SISC_2014}.

\subsection{1D Convergence tests}
\label{Sec:1D_viscous_shock}

We estimate the convergence properties of the method on a smooth solution.
We consider a one-dimensional viscous shockwave problem that has an exact
solution which is described in \cite{Becker_1922}. A partial English
translation of \citep{Becker_1922} and other exact solutions are found in
\cite{Johnson_JFM_2013}. The Navier-Stokes system \eqref{NS} is solved over
the real line with no source term, $\bef=\bzero$.

One key assumption of \citep{Becker_1922} is that the Prandtl number
$P_r\eqq \frac{\mu c_P}{\kappa}$ is fixed and equal to $\frac34$. Recall
that $\mu$ is the shear viscosity and $\kappa$ is the thermal conductivity.
The bulk viscosity $\lambda$ is set to $0$.

We first construct a steady state solution. Let $\rho(x)$ be the density,
$v(x)$ the velocity, and $e(x)$ the internal energy. Let $v_0$ be the
velocity at infinity on the left ($v_0\eqq\lim_{x\to -\infty} v(x)$) and
let $v_1$ be the velocity at infinity on the right ($v_1\eqq\lim_{x\to
+\infty} v(x)$).  We assume that $v_0>v_1$. We define $v_{01}\eqq \sqrt{v_0
v_1}$.  Let $\rho_0$ be the density at infinity on the left. Since the
solution is time-independent, the momentum is constant, say $m_0$. In the
context of the above assumptions, it is shown in
\citep[Eq.~(30.a)]{Becker_1922} (see also
\citep[Eq.~(3.6)]{Johnson_JFM_2013}) that the velocity profile $\Real \ni
x\mapsto v(x)$ is defined implicitly as the solution to the
following equation:
\begin{equation}
  \label{Becker_1D_solution}
  x = \frac{2}{\gamma+1} \frac{\kappa}{m_0 c_v}
  \Big\{\frac{v_0}{v_0-v_1}\log\Big(\frac{v_0-v(x)}{v_0-v_{01}}\Big)
  - \frac{v_1}{v_0-v_1}\log\Big(\frac{v(x)-v_1}{v_{01}-v_1}\Big)\Big\}.
\end{equation}
This equation is solved numerically to high accuracy by using a Newton
technique. Notice that by convention, \eqref{Becker_1D_solution} implies
that $v(0) = v_{01}$. Once $v(x)$ is known, the density and the internal
energy at $x$ are given by
\begin{equation}
  \rho(x) = \frac{m_0}{v(x)},\qquad
  e(x) = \frac{1}{2\gamma}\Big(\frac{\gamma+1}{\gamma-1}v_{01}^2 -
  v^2(x)\Big).
\end{equation}
To obtain a time-dependent solution, which is computationally more
challenging than solving a steady state solution, we construct a moving
wave as follows. We first introduce the constant translation velocity
$v_\infty$ and we define
\begin{equation}
  \bu(x,t)\eqq \begin{pmatrix}\rho(x-v_\infty t)
  \\
  \rho(x-v_\infty t)(v_\infty + v(x-v_\infty t))
  \\
  \rho(x-v_\infty t)(e(x-v_\infty t) + \frac12(v_\infty+v(x-v_\infty t))^2
  \end{pmatrix}.
\end{equation}
The field $\bu$ solves~\eqref{NS} for any $v_\infty$ since the
Navier-Stokes equations are Galilean invariant. This solution is used for
instance in \cite{Dumbser_Comput_Fluids_2010} for verification purposes.

We now compare the above solution to numerical simulations using the
following parameters $\gamma=1.4$, $\mu=0.01$, $v_\infty=0.2$, $v_0=1$,
$\rho_0=1$. This gives $m_0=1$.  Instead of enforcing $v_1$, we choose the
pre-shock Mach number $M_0=3$, which then gives
$v_1=\frac{\gamma-1+2 M_0^{-2}}{\gamma+1}$; see
\citep[Eq.~(2.10)]{Johnson_JFM_2013}.
Notice that $\kappa=\frac{\mu c_p}{P_r}$ with $P_r=\frac34$.  We use the
truncated domain $[-1,1.5]$ (the larger the domain the higher the accuracy
that can be reached on extremely fine grids).
Inhomogeneous Dirichlet boundary conditions are enforced on all conserved
quantities $\bu=(\rho,\bbm,E)$ at the left and right boundary (see
\S\ref{Sec:NS_model}).
The simulations are run until $t=3$. The distance traveled by the shock is
$0.6$. For $q\in \{1,2,\infty\}$, we compute a consolidated error indicator
at the final time by adding the relative error in the $L^q$-norm of the
density, the momentum, and the total energy as follows:
\begin{align}
  \label{def_delta_t}
  \delta_q(t):=\frac{\|\rho_h(t)-\rho(t)\|_{L^q(\Dom)}}{\|\rho(t)\|_{L^q(\Dom)}} +
  \frac{\|\bbm_h(t)-\bbm(t)\|_{\bL^q(\Dom)}}{\|\bbm(t)\|_{\bL^q(\Dom)}} +
  \frac{\|E_h(t)-E(t)\|_{L^q(\Dom)}}{\|E(t)\|_{L^q(\Dom)}}.
\end{align}
We show in Table~\ref{Table:1D_viscous_SW} the results for 7 uniform
grids. The coarsest grid has $50$ grid points and the finest has $3200$
grid points. The number of grid points is denoted by $\Nglob$. We observe
second-order convergence in time and space in all the norms, as expected.

\begin{table}[ht]\small \centering
  \caption{%
    1D Viscous schockwave, $\mathbb{P}_1$ uniform meshes, Convergence tests,
    $t=3$, $\text{CFL}=0.4$.}
  \label{Table:1D_viscous_SW}
  \begin{tabular}{rcccccc}
    \toprule
    \Nglob & $\delta_1(t)$ & rate & $\delta_2(t)$ & rate & $\delta_\infty(t)$ & rate \\[0.5em]
     50    & 5.85E-02      & --   & 3.11E-01      & --   & 8.28E-03           & --   \\
     100   & 2.50E-02      & 1.23 & 1.91E-01      & 0.71 & 2.82E-03           & 1.55 \\
     200   & 4.83E-03      & 2.37 & 3.27E-02      & 2.54 & 5.13E-04           & 2.46 \\
     400   & 1.07E-03      & 2.17 & 9.79E-03      & 1.74 & 9.32E-05           & 2.46 \\
     800   & 2.52E-04      & 2.09 & 2.29E-03      & 2.10 & 2.02E-05           & 2.21 \\
    1600   & 6.20E-05      & 2.02 & 5.76E-04      & 1.99 & 4.89E-06           & 2.05 \\
    3200   & 1.55E-05      & 2.00 & 1.46E-04      & 1.98 & 1.23E-06           & 1.99 \\
    \bottomrule
  \end{tabular}\\[1em]

  \caption{%
    2D Viscous schockwave, $\mathbb{P}_1$ nonuniform Delaunay meshes, $t=3$,
    $\text{CFL}\in\{0.4,0.9\}$.}
  \label{Table:2D_viscous_SW}
  \begin{tabular}{crcccccc}
    \toprule
    CFL
    & \Nglob & $\delta_1(t)$ & rate & $\delta_2(t)$ & rate & $\delta_\infty(t)$ & rate \\[0.5em]
    \multirow{5}{*}{0.4} 
    & 4458   & 8.99E-03      & --   & 1.49E-02      & --   & 1.20E-01           & --   \\
    & 17589  & 1.35E-03      & 2.76 & 3.04E-03      & 2.31 & 3.23E-02           & 1.91 \\
    & 34886  & 5.19E-04      & 2.80 & 1.47E-03      & 2.13 & 1.44E-02           & 2.36 \\
    & 69781  & 2.45E-04      & 2.17 & 7.20E-04      & 2.05 & 7.93E-03           & 1.72 \\
    & 139127 & 1.04E-04      & 2.47 & 3.71E-04      & 1.93 & 3.27E-03           & 2.56 \\[0.5em]
    \multirow{5}{*}{0.9} 
    & 4458   & 6.99E-03      & --   & 2.03E-02      & --   & 1.58E-01           & --   \\
    & 17589  & 9.51E-04      & 2.91 & 3.39E-03      & 2.61 & 3.61E-02           & 2.15 \\
    & 34886  & 3.98E-04      & 2.54 & 1.60E-03      & 2.20 & 1.55E-02           & 2.47 \\
    & 69781  & 1.79E-04      & 2.30 & 7.54E-04      & 2.17 & 8.23E-03           & 1.83 \\
    & 139127 & 8.17E-05      & 2.28 & 3.67E-04      & 2.09 & 3.28E-03           & 2.67 \\
    \bottomrule
  \end{tabular}
\end{table}

\subsection{2D Convergence tests} \label{Sec:2D_viscous_shock}

We use again the exact shockwave solution described in
\S\ref{Sec:1D_viscous_shock} to verify the method in two-space dimensions.
This test is also meant to verify that the method is genuinely second-order
accurate on non-uniform meshes. Here we use nonuniform Delaunay
triangulations. The convergence tests are done in the truncated domain
$\Dom=(-0.5,1)\CROSS(0,1)$.
In addition to inhomogeneous Dirichlet boundary conditions on the left and
right side we enforce periodic boundary conditions on $\{y=0\}$ and
$\{y=1\}$.
The length of the domain in the $x$-direction is slightly smaller than for
the one-dimensional tests reported above. We do not expect to saturate the
relative error indicators $\delta_1$, $\delta_2$ and $\delta_\infty$ due to
boundary effects in this smaller computational domain since we restrict the
meshsize not to be smaller than $1/425$.  We use 5 meshes. These meshes are
not nested to eliminate the risk of observing super-convergence effects.
This makes having consistent convergence rates more difficult and therefore
tests the robustness of the method. The meshsizes for these meshes are
approximately $0.02, 0.01, 0.0707, 0.05, 0.003536$. The results are
reported in Table~\ref{Table:2D_viscous_SW} for the two CFL numbers $0.4$
and $0.9$. We observe that the method is second-order accurate both in time
and space, for both CFL numbers, and in all error norms.

\subsection{2D shocktube test}

As a final numerical test we simulate the interaction of a shock with a
viscous boundary layer. The test case we consider has been introduced in
the literature by \cite{Daru_Tenaud_2000} and is further documented in
\cite{Daru_Tenaud_2009}. It is essentially a shocktube problem. The tube is
the square cavity $\Dom=(0,1)^2$ with a diaphragm at $\{x=\frac12\}$
separating it in two parts. The fluid is initially at rest. The state on
the left-hand side of the diaphragm is $\rho_L=120$, $v_L=0$,
$p_L=\rho_L/\gamma$. The right state is $\rho_R=1.2$, $v_R=0$,
$p_R=\rho_R/\gamma$. We use the ideal gas equation of state
$p=(\gamma-1)\rho e$ with $\gamma=1.4$. The bulk viscosity is set to $0$.
The Prandtl number is $Pr=0.73$.
No-slip and thermally insulating boundary conditions
\eqref{BOUNDARY_CONDITION} are enfourced throughout.
The diaphragm is broken at $t=0$. A shock, a contact and a rarefaction wave
are created. The viscous shock and the contact move to the right.  The
rarefaction wave moves to the left.  As the shock and the contact waves
progress to the right they create thin viscous boundary layers on the top
and the bottom walls of the tube.  The shock hits the right wall at
approximately $t\approx 0.2$ and is then reflected.  The shock interacts
with the contact discontinuity on its way back to the left.  Complex
interactions occur and the contact discontinuity stays stationary close to
the right wall thereafter. The shock wave then continues its motion to the
left and interacts with the viscous boundary layer which it created while
moving to the right. This interaction is very strong and a lambda shock is
formed as a result. We refer to \citep[\S6]{Daru_Tenaud_2000} and
\citep[\S5\&\S6]{Daru_Tenaud_2009} for full descriptions of the various
mechanisms at play in this problem.

The computations reported in this paper are done in the half domain
$(0,1)\CROSS(0,\frac12)$. Symmetry with respect to the horizontal axis
$\{y=\frac12\}$ is obtained by enforcing a slip boundary condition instead
of the no-slip boundary condition \eqref{BOUNDARY_CONDITION}.
This is achieved algebraically by simply replacing the homogeneous
Dirichlet condition $\bsfV_i^{n+\frac{1}{2}}=\bzero$ in
\eqref{parabolic_discrete} by $\bn\SCAL\bsfV_i^{n+\frac{1}{2}} = \bzero$
for the upper boundary at $\{y=\frac12\}$.
The CFL number used for these computations is $0.95$ (see
\eqref{def_of_CFL} and \eqref{def_of_dt_complete_method}). The computations
are done with nonuniform meshes that are progressively refined. The meshes
are highly nonuniform to concentrate the grid points in the right part of
the cavity. In mesh~1 the meshsize is about $0.0007$ on $\{ 0.3\le x\le 1,
y =0\}$ and $0.0014$ on $\{ 0.5\le x\le 1, y =0.5\}$ ($359388$ grid
points). The meshsize in the second mesh is about $0.0005$ on $\{ 0.3\le
x\le 1, y =0\}$ and $0.001$ on $\{ 0.5\le x\le 1, y =0.5\}$ ($684996$ grid
points).  For mesh~3 the meshsize is about $0.0004$ on $\{ 0.3\le x\le 1, y
=0\}$ and $0.001$ on $\{ 0.5\le x\le 1, y =0.5\}$ ($859765$ grid points).

\begin{figure}[hbt]\setlength{\fboxsep}{0pt}%
  \setlength{\fboxrule}{0.5pt}%
  \begin{center}
    \subfloat[Mesh 1, $t=0.6$.]%
      {\fbox{\includegraphics[width=0.32\textwidth,trim = 450 0 0 590,clip]{\FIGS/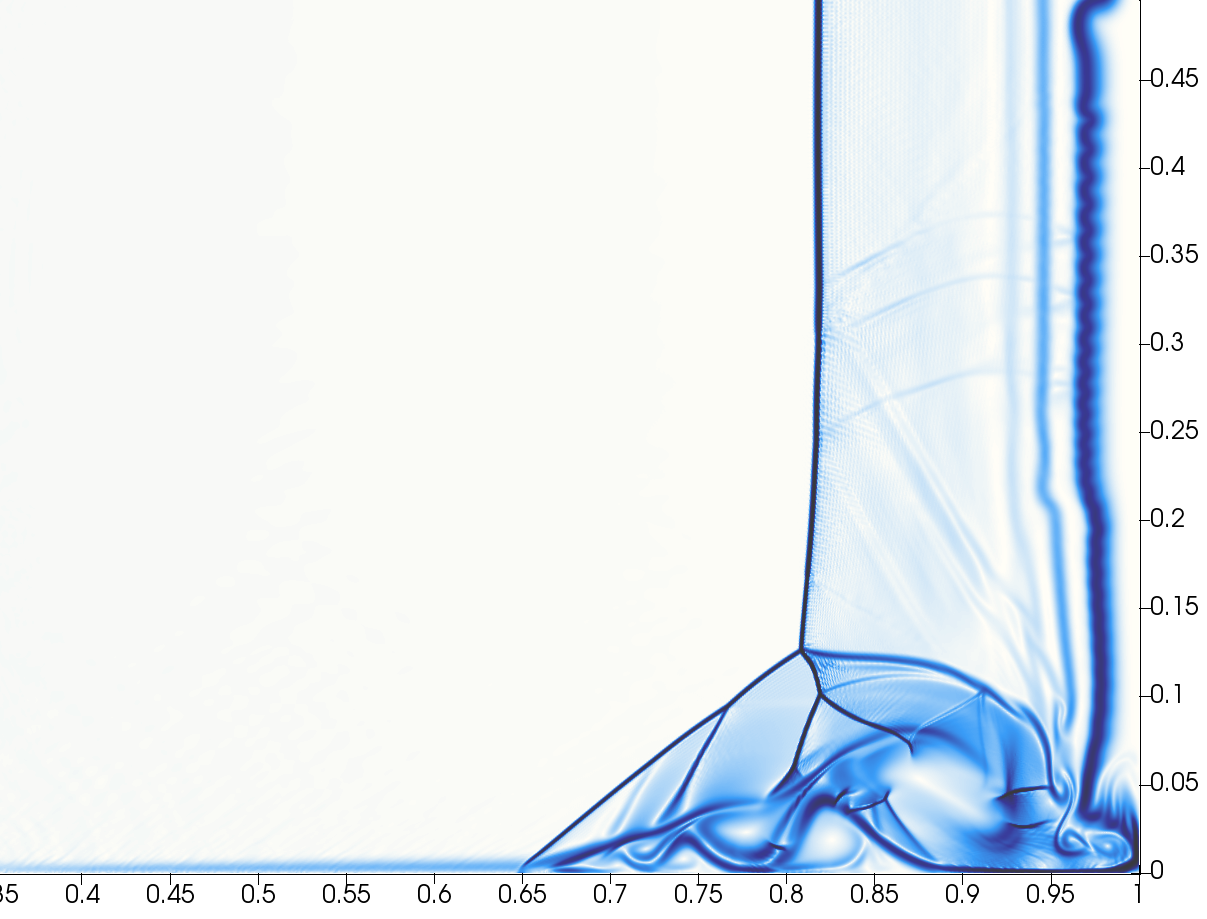}}}
      \hspace{0.2em}
      \subfloat[Mesh 1, $t=0.8$.]
      {\fbox{\includegraphics[width=0.32\textwidth,trim = 320 0 0 535,clip]{\FIGS/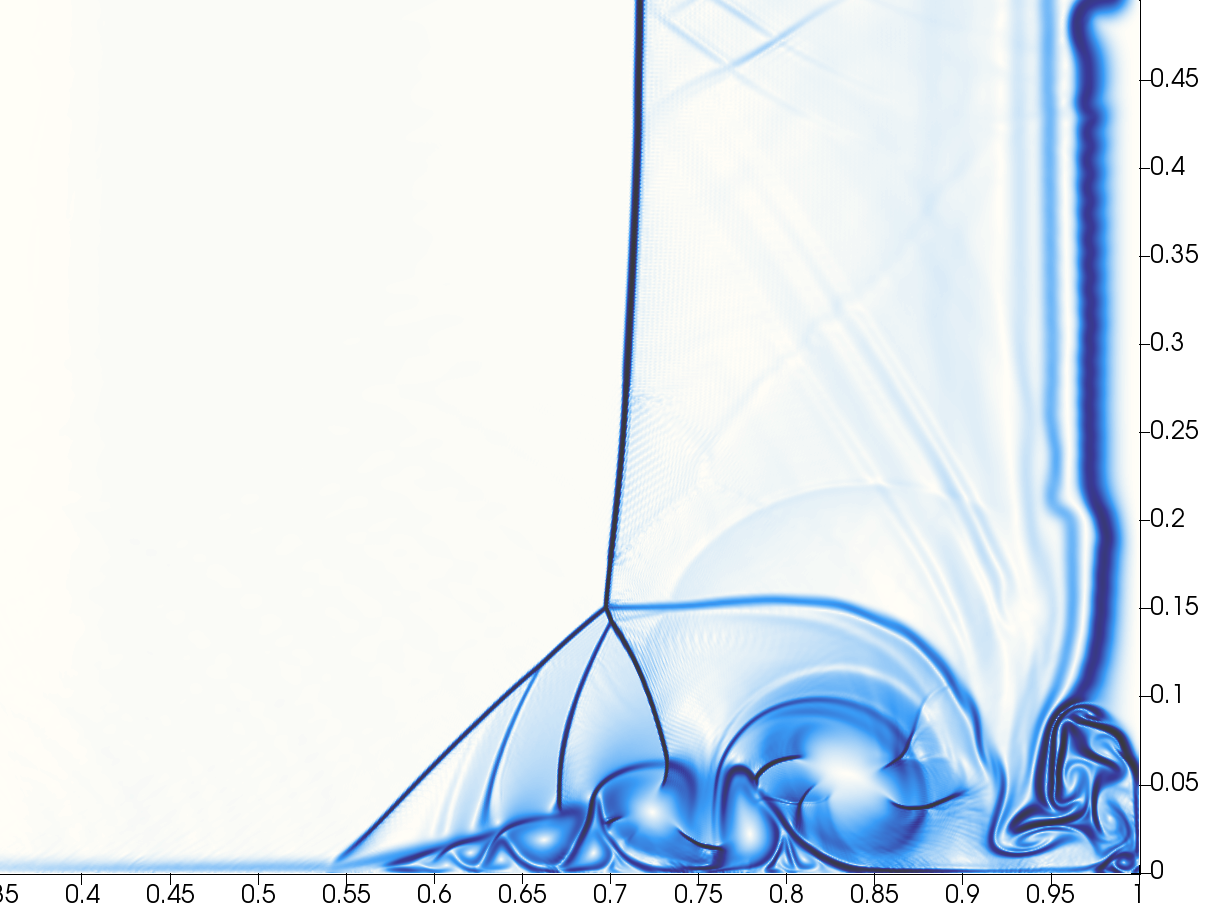}}}
      \hspace{0.2em}
   \subfloat[Mesh 1, $t=1$.]
      {\fbox{\includegraphics[width=0.32\textwidth,trim = 155 0 20 400,clip]{\FIGS/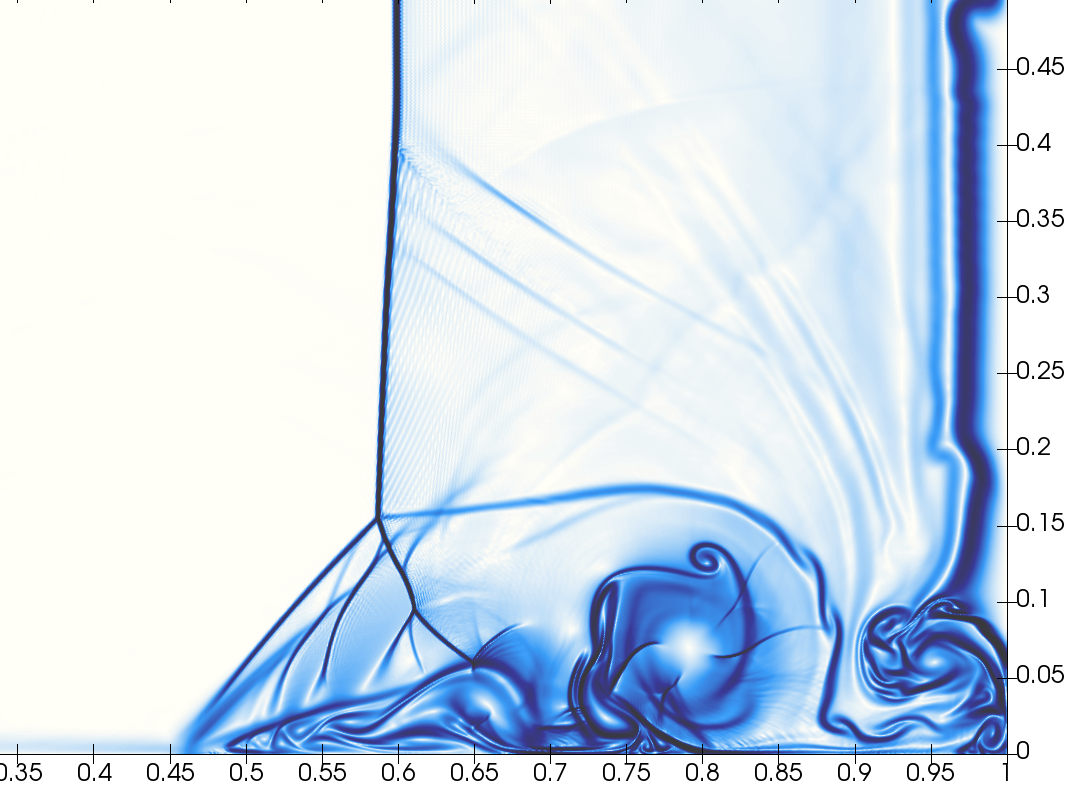}}}\\[-5pt]
    \subfloat[Mesh 2, $t=0.6$.]
      {\fbox{\includegraphics[width=0.32\textwidth,trim = 450 0 0 590,clip]{\FIGS/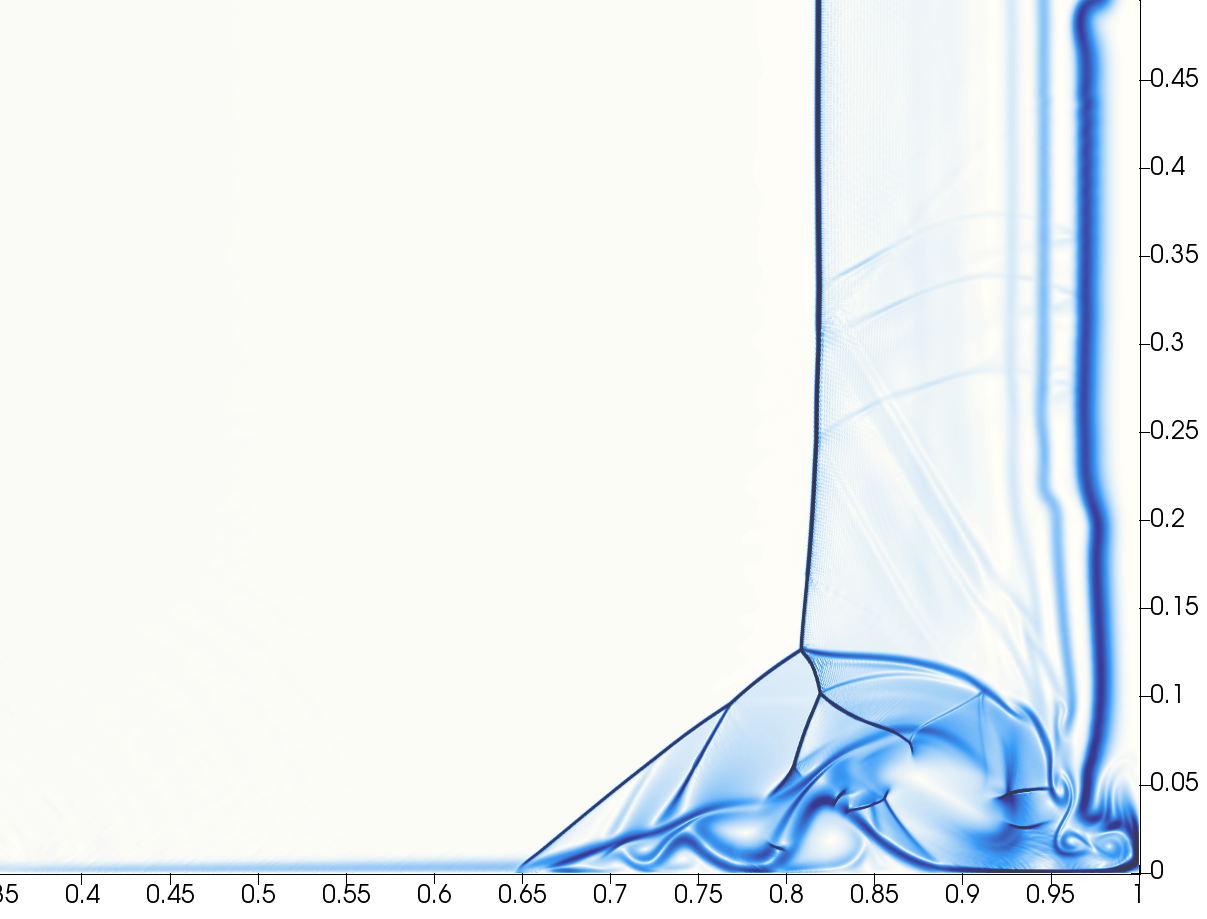}}}
      \hspace{0.2em}
      \subfloat[Mesh 2, $t=0.8$.]
      {\fbox{\includegraphics[width=0.32\textwidth,trim = 320 0 0 535,clip]{\FIGS/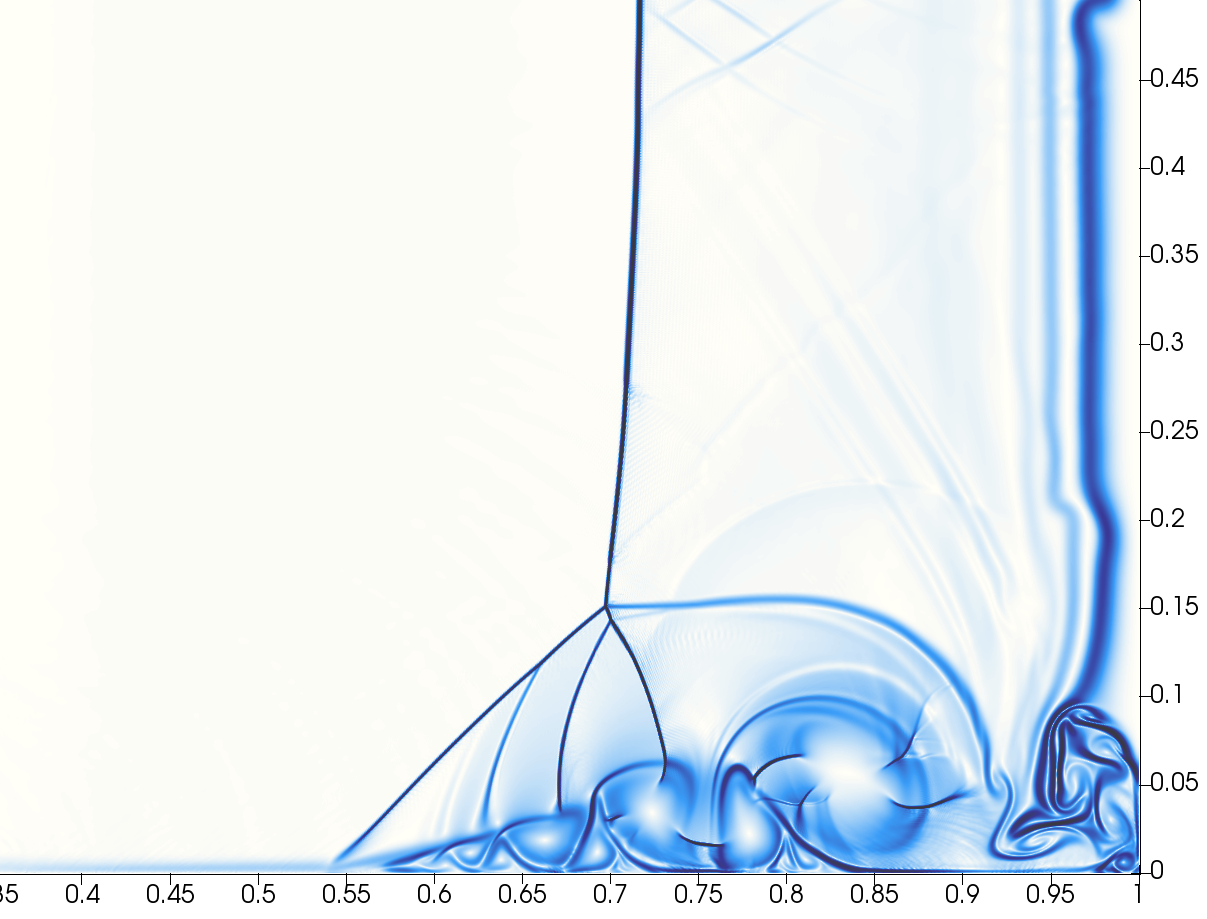}}}
      \hspace{0.2em}
      \subfloat[Mesh 2, $t=1$.]
      {\fbox{\includegraphics[width=0.32\textwidth,trim = 120 0 0 449,clip]{\FIGS/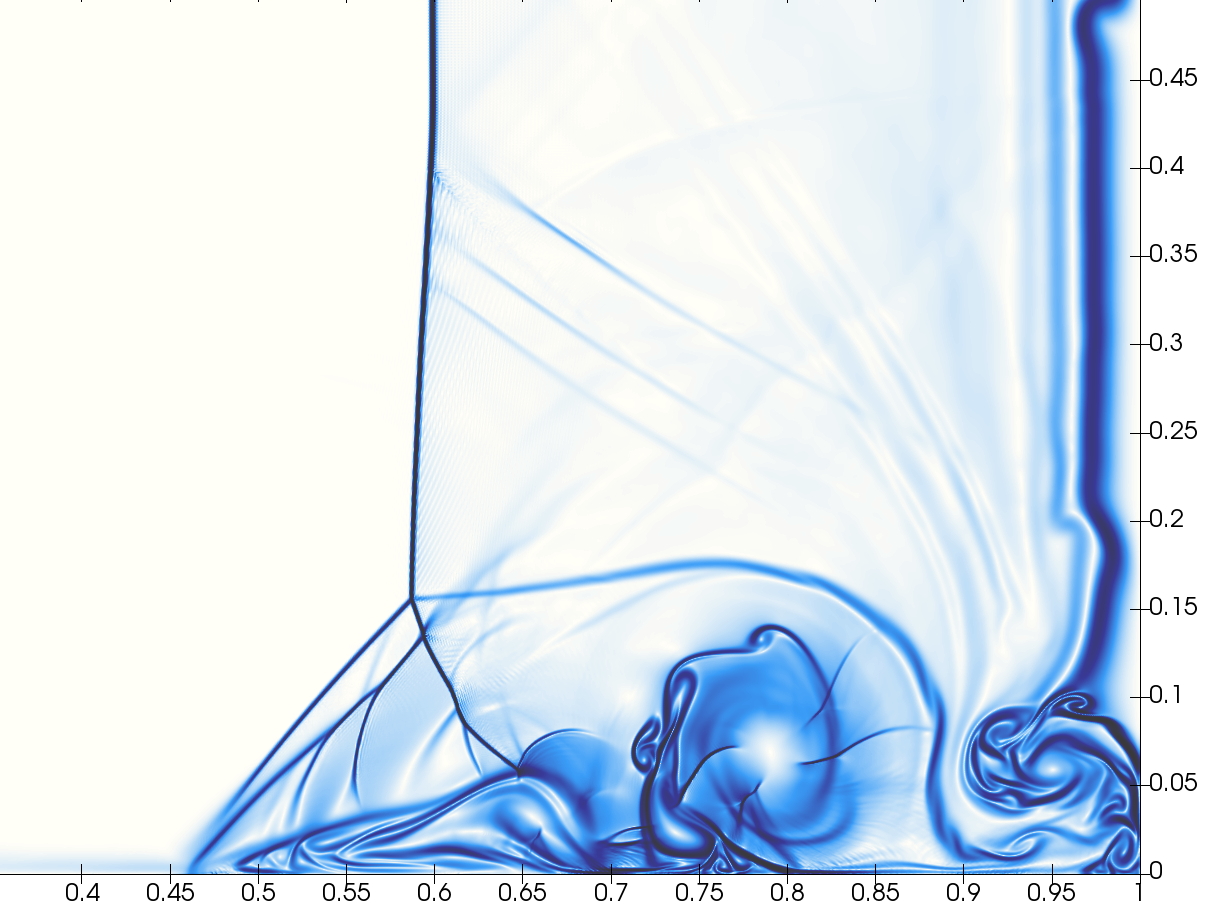}}}\\[-5pt]
      \subfloat[Mesh 3, $t=0.6$.]
      {\fbox{\includegraphics[width=0.32\textwidth,trim = 450 0 0 590,clip]{\FIGS/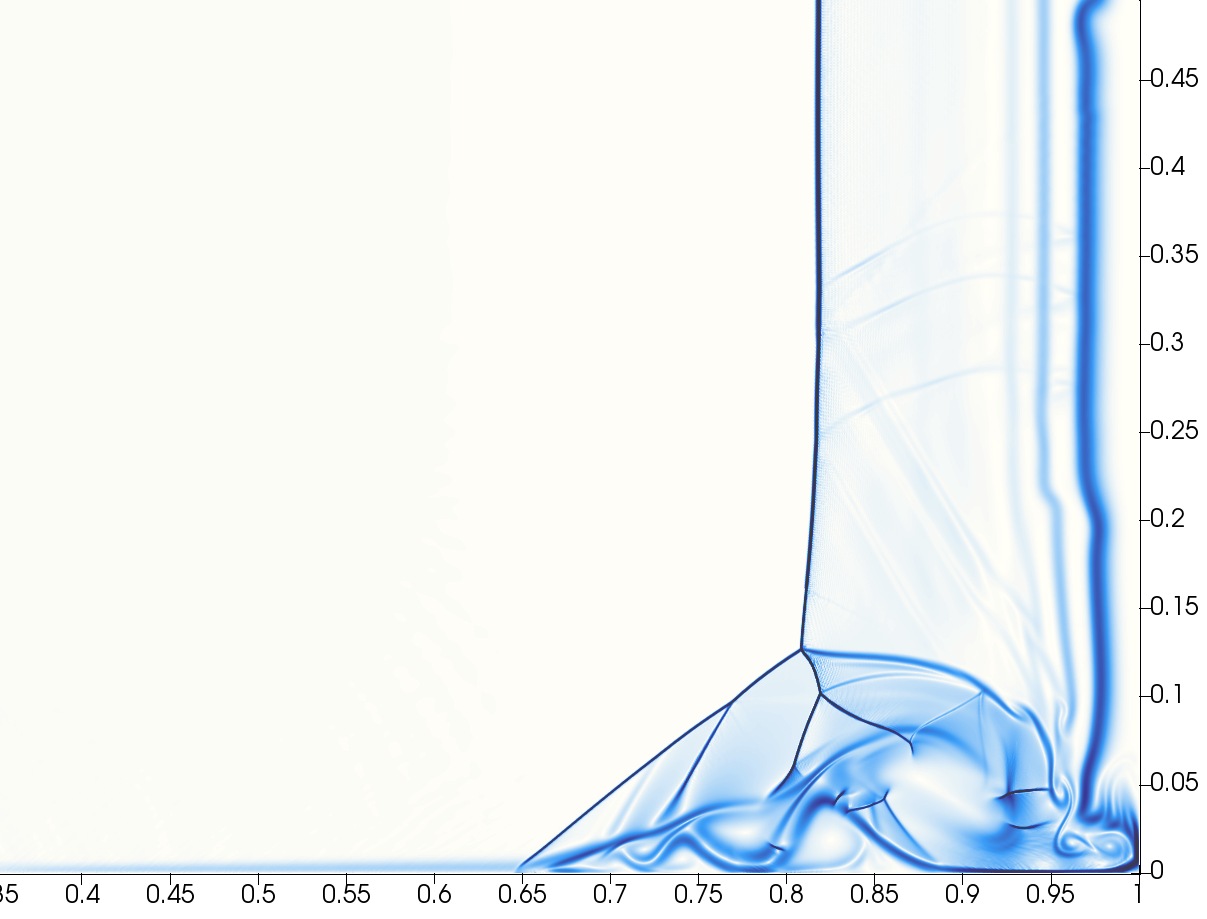}}}
      \hspace{0.2em}
      \subfloat[Mesh 3, $t=0.8$.]
      {\fbox{\includegraphics[width=0.32\textwidth,trim = 320 0 0 535,clip]{\FIGS/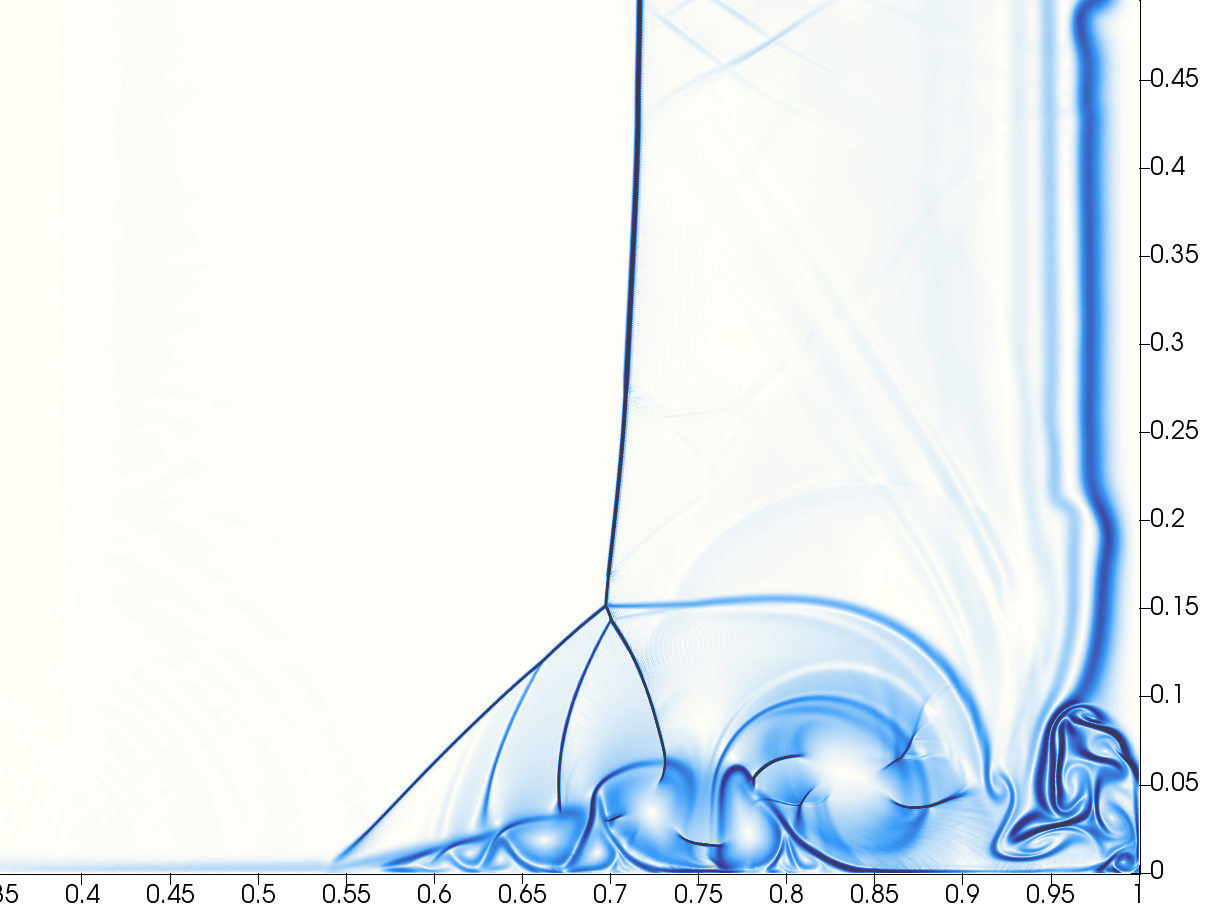}}}
      \hspace{0.2em}
    \subfloat[Mesh 3, $t=1$.]
      {\fbox{\includegraphics[width=0.32\textwidth,trim = 120 0 0 449,clip]{\FIGS/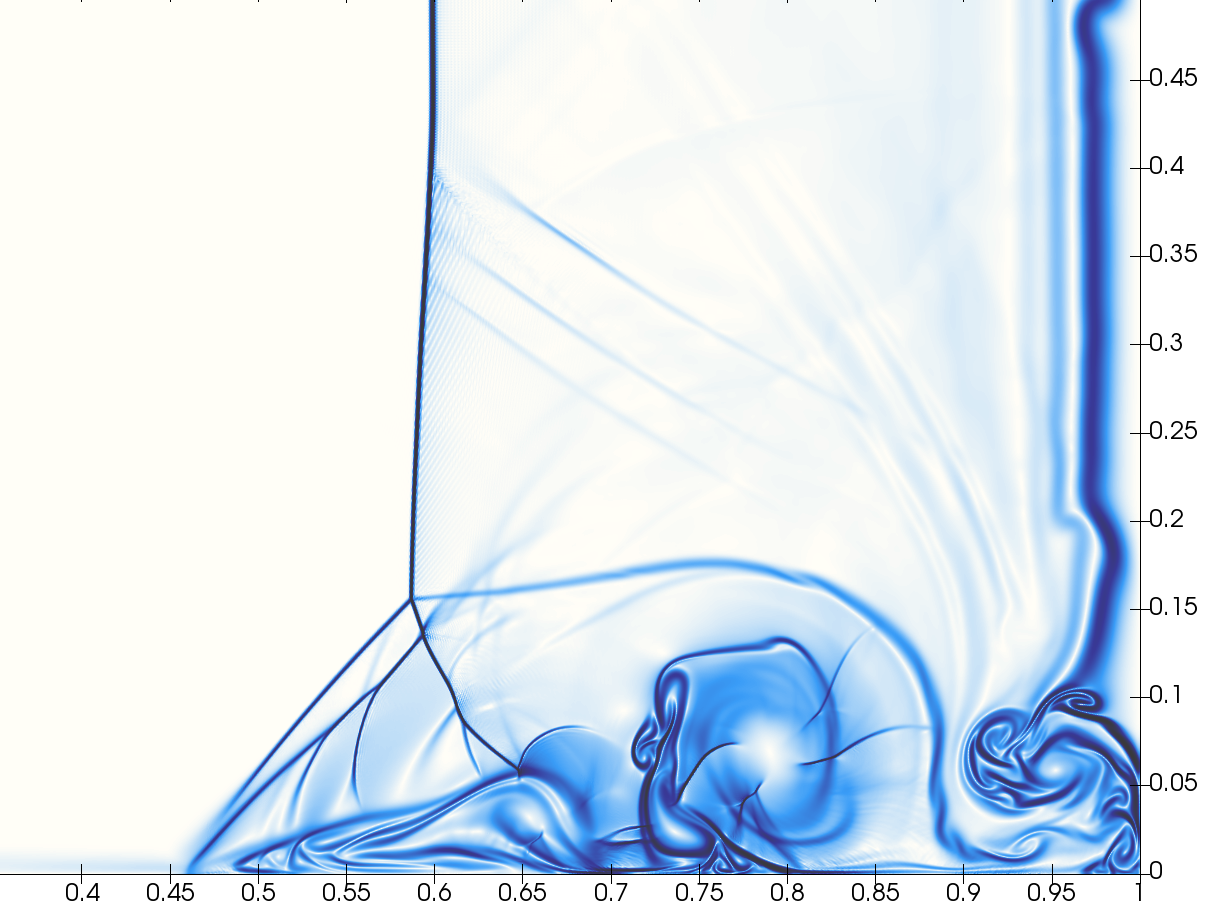}}\label{Fig:2D_SW_RE1000_iso_mesh3_t1}}
  \end{center}
  \caption{2D shocktube test. Density at $t\in\{0.6,0.8,1\}$ with $\mu=10^{-3}$. Meshes with
    increasing refinement level: Mesh~1, $359388$ grid point; Mesh~2, $684996$ grid point; Mesh~3, $859765$ grid points.}
  \label{Fig:2D_SW_RE1000}%
\end{figure}
We start by demonstrating the behavior of the method under nonuniform
mesh refinement. We show in Figure~\ref{Fig:2D_SW_RE1000} the gradient
of the density field at $t\in\{0.6,0.8,1\}$ for the three meshes:
Mesh~1 to Mesh~3. More precisely, denoting
$g(\bx)=\|\GRAD \rho_h(\bx)\|_{\ell^2}$,
$g_{\min}= \min_{\bx \in \Dom} g(\bx)$,
$g_{\max}= \max_{\bx \in \Dom} g(\bx)$, we visualize the quantity
$e^{-10\frac{g-g_{\min}}{g_{\max}-g_{\min}}}$ to amplify the
contrast. We observe that the results at $t=0.6$ and at $t=0.8$ vary
very little as the grids are refined. Some local changes are
  noticeable for the solution at $t=1$, but the overall structure of
  the flow seems to be converging when the meshsize decreases. There
is no real consensus yet in the literature on the solution at $t=1$
for $\mu=10^{-3}$.  For instance various schemes are tested in
\citet{Sjogreen_Yee_2003} on meshes ranging from $1000\CROSS 500$ grid
points to $4000\CROSS 2000$ grid points (in the half domain), but the
results reported therein seem to depend on the scheme that is chosen.
It is remarkable though that our results on the finest grid
  (Fig.~\ref{Fig:2D_SW_RE1000_iso_mesh3_t1}) are strikingly similar
to those reported Fig.~8d in \citet{Daru_Tenaud_2009}  and Fig.~11l in
\citet{Guangzhao_Kun_Feg_Phys_Fluids_2018} (see also Fig.~5a in \citep{Daru_Tenaud_2009} and Fig.~6c in
\citep{Guangzhao_Kun_Feg_Phys_Fluids_2018}); these three figures are
almost Xerox copies of each other. But none of the results reported in
\citep{Sjogreen_Yee_2003}  (and \citep{Kotov_etal_2014}) agree with the results shown in
Figure~\ref{Fig:2D_SW_RE1000} (and Fig.~8d in \citep{Daru_Tenaud_2009}
and Fig.~11l in \citep{Guangzhao_Kun_Feg_Phys_Fluids_2018}). In
  conclusion, it seems that our results agree very well with those reported
  in \cite{Daru_Tenaud_2009}
  and \cite{Guangzhao_Kun_Feg_Phys_Fluids_2018} but disagree with those
  reported in \citet{Sjogreen_Yee_2003} (and \citet{Kotov_etal_2014}), thereby shedding some doubts
  on the correctness of the computations in \citep{Sjogreen_Yee_2003,Kotov_etal_2014}.  If we are to believe
that in absence of vacuum the compressible Navier-Stokes equations in
two dimensions exhibit continuous dependence with respect to the
initial data, and it should therefore be possible to compute a
reference solution at $t=1$, then further computations with finer
meshes have to be done to clarify unambiguously and definitively the above issue.

\begin{figure}[hbt]\setlength{\fboxsep}{0pt}%
  \setlength{\fboxrule}{0.5pt}%
  \begin{center}
    \subfloat[$\mu=10^{-3}$]
      {\fbox{\includegraphics[width=0.49\textwidth,trim = 5 0 0 440,clip=]{\FIGS/RE1000_0p001_0004_t1.png}}}
    \hspace{0.3em}
    \subfloat[$\mu=5\CROSS 10^{-4}$]
      {\fbox{\includegraphics[width=0.49\textwidth,trim = 5 0 0 440,clip=]{\FIGS/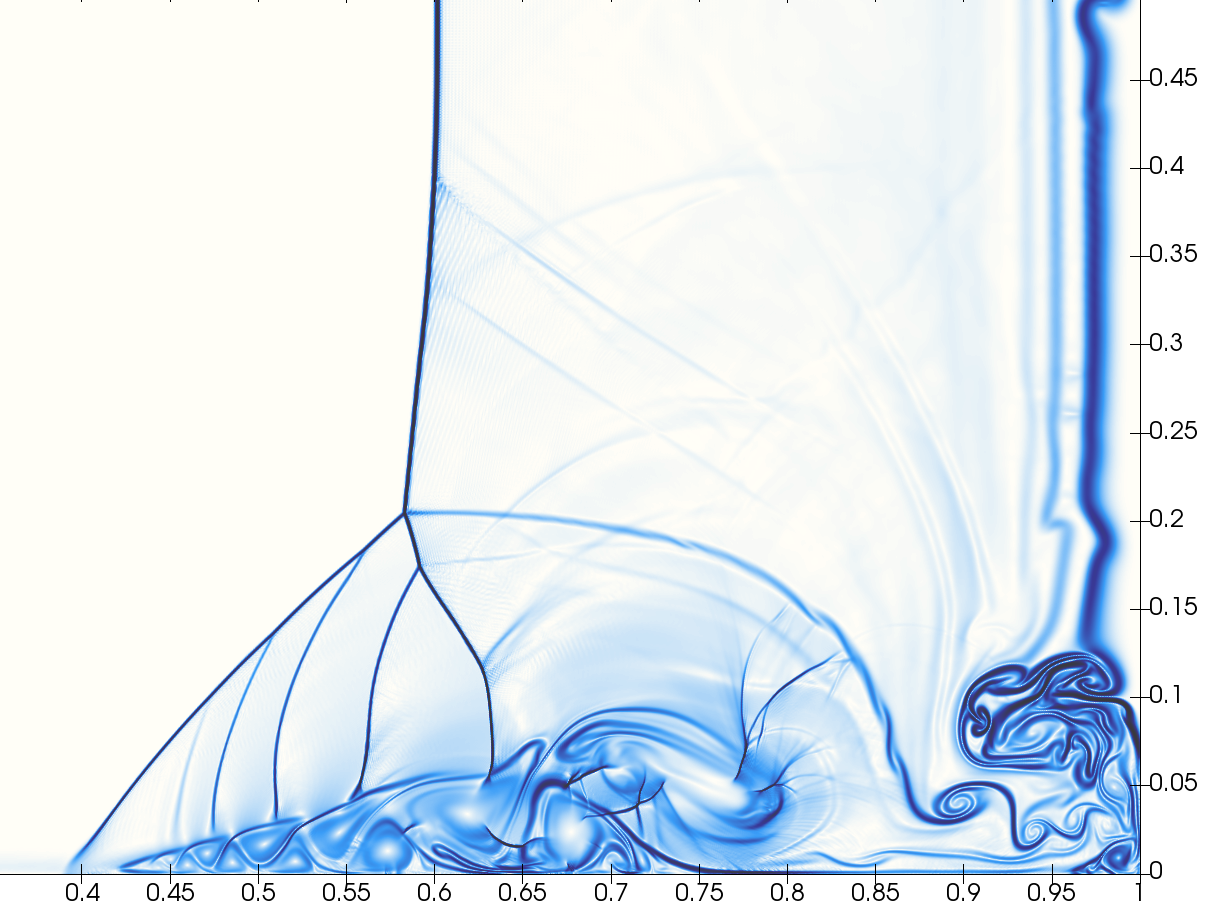}}} \\[-5pt]
    \subfloat[$\mu=2\CROSS 10^{-4}$]
      {\fbox{\includegraphics[width=0.49\textwidth,trim = 5 0 0 440,clip=]{\FIGS/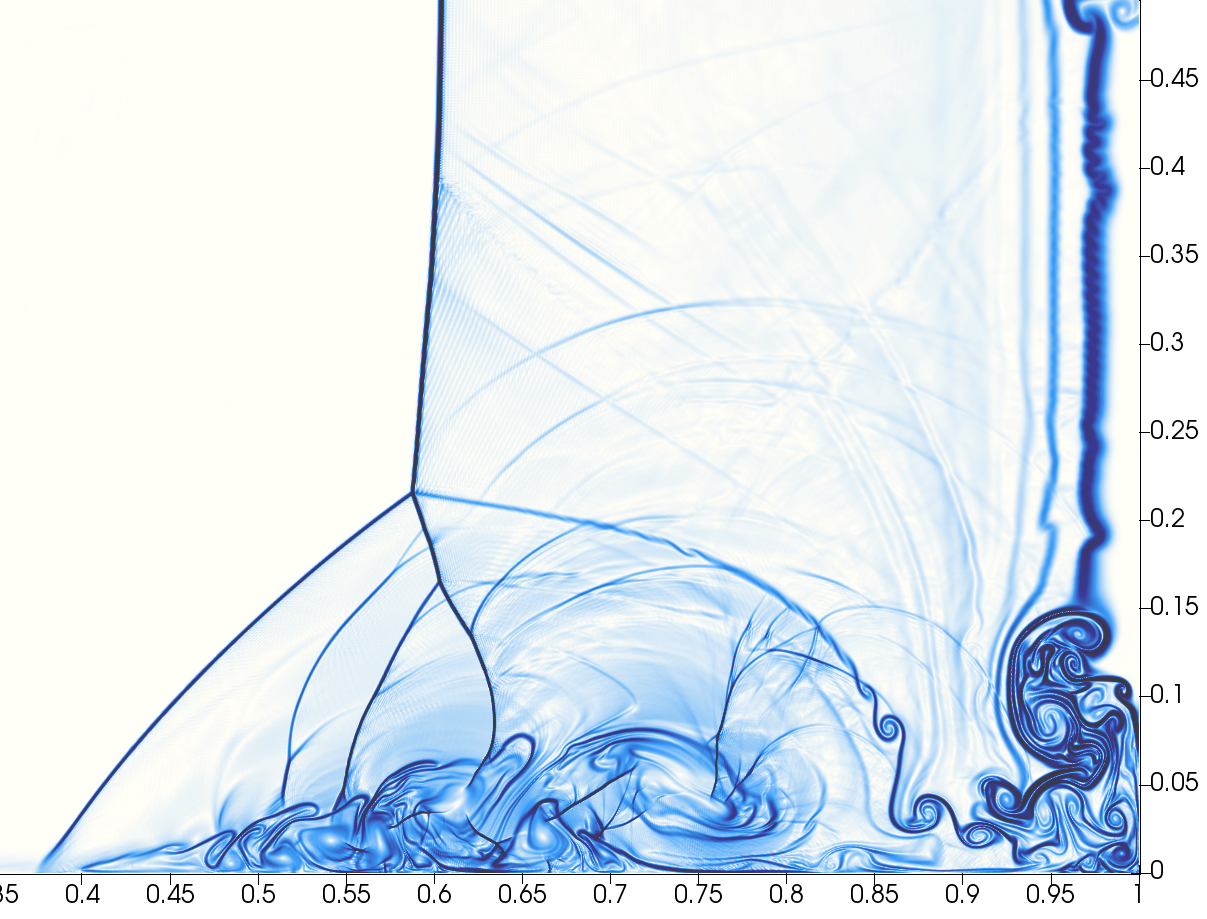}}}
    \hspace{0.3em}
    \subfloat[$\mu=10^{-4}$]
      {\fbox{\includegraphics[width=0.49\textwidth,trim = 5 0 0 440,clip=]{\FIGS/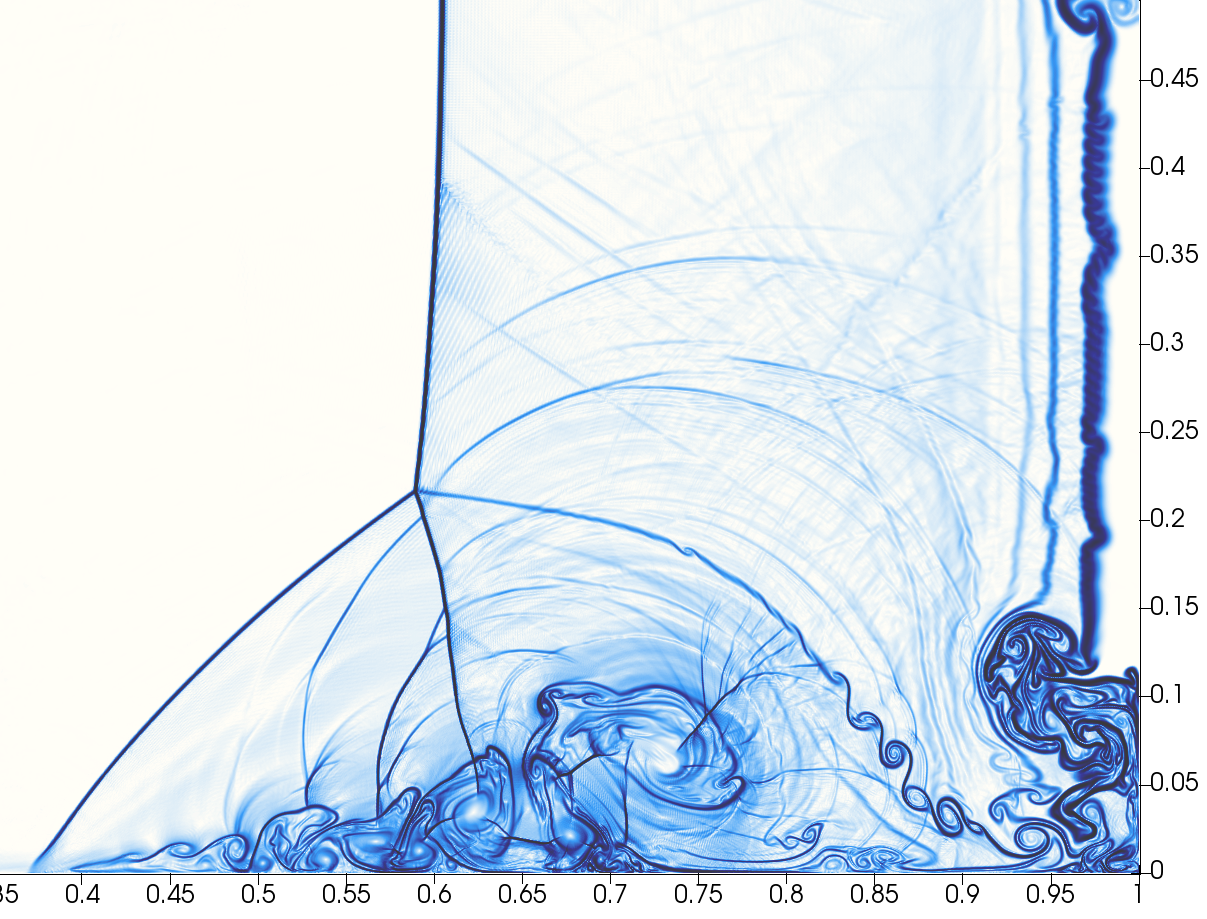}}}
  \end{center}
  \caption{%
    2D shocktube test, Mesh~3. Density at $t=1$ for $\mu\in \{10^{-3},5\CROSS 10^{-4},2\CROSS 10^{-4},10^{-4}\}$.}
  \label{Fig:2D_SW_RE1000_to_10000}
\end{figure}
As a last numerical illustration we recompute the density field at $t=1$ on
Mesh~4 for four increasingly smaller viscosities $\mu\in\{10^{-3},5\CROSS
10^{-4},2\CROSS 10^{-4},10^{-4}\}$. Results are reported in
Fig.~\ref{Fig:2D_SW_RE1000_to_10000}. We observe that for decreasing
viscosity the flow field develops increasingly more pronounced and smaller
vortex structures. This confirms that the influence of the artificial graph
viscosity of the hyperbolic step (see \S\ref{Sec:1D_viscous_shock}) is well
below viscous effects introduced by the physical viscosity $\mu$.


\section{Conclusions and Outlook}
\label{Sec:conclusions}
A fully discrete second-order order accurate method for solving the
compressible Navier-Stokes equations has been introduced. The novelty of
this work lies in the guaranteed invariant domain preservation of the fully
discrete method under the usual hyperbolic CFL condition. The method relies on
the operator-splitting strategy in order to preserve invariant set stability
properties. There is, in principle, no limitation for the accuracy in space. We
also notice that the method exhibits quite robust behaviour (in the eye-ball
norm) for flows containing strong shock interactions with viscous layers. At this
point in time, it is not yet clear how to develop a third-order accurate
(in-time) invariant-domain-preserving scheme.


\bibliographystyle{abbrvnat}

\end{document}